\documentclass[reqno]{amsart}
\pdfoutput=1
\makeatletter
\let\origsection=\section \def\section{\@ifstar{\origsection*}{\mysection}} 
\def\mysection{\@startsection{section}{1}\z@{.7\linespacing\@plus\linespacing}{.5\linespacing}{\normalfont\scshape\centering\S}}
\makeatother        

\usepackage{amsmath,amssymb,amsthm}
\usepackage{mathrsfs}
\usepackage{dsfont}

\usepackage{todonotes}
\usepackage{verbatim}
 
\usetikzlibrary{arrows.meta}

\usepackage{graphicx}
\usepackage{xcolor}
\usepackage{tikz}
\usetikzlibrary{arrows}

\usepackage[backref]{hyperref}
\hypersetup{
    colorlinks,
    linkcolor={red!60!black},
    citecolor={green!60!black},
    urlcolor={blue!60!black},
    pdftitle={End-degree in digraphs},
    pdfauthor={Matthias Hamann, Karl Heuer}
}

\usepackage[open,openlevel=2,atend]{bookmark}

\usepackage[abbrev,msc-links,backrefs]{amsrefs} 
\usepackage{doi}

\renewcommand{\PrintDOI}[1]{\doi{#1}}

\usepackage[T1]{fontenc}
\usepackage{lmodern}
\usepackage[babel]{microtype}
\usepackage[english]{babel}

\usepackage{geometry}
\numberwithin{equation}{section}
\numberwithin{figure}{section}

\usepackage[shortlabels]{enumitem}

\newcommand{\cE}{\ensuremath{\mathcal{E}}}
\newcommand{\cP}{\ensuremath{\mathcal{P}}}
\newcommand{\cQ}{\ensuremath{\mathcal{Q}}}
\newcommand{\cR}{\ensuremath{\mathcal{R}}}
\newcommand{\cX}{\ensuremath{\mathcal{X}}}
\newcommand{\cY}{\ensuremath{\mathcal{Y}}}
\newcommand{\cZ}{\ensuremath{\mathcal{Z}}}

\let\polishlcross=\l
\def\l{\ifmmode\ell\else\polishlcross\fi}

\def\paragraph#1{%
  \noindent\textbf{#1.}\enspace}

\let\sm=\setminus

\newcommand{\N}{\ensuremath{\mathbb N}}

\makeatletter
\def\moverlay{\mathpalette\mov@rlay}
\def\mov@rlay#1#2{\leavevmode\vtop{   \baselineskip\z@skip \lineskiplimit-\maxdimen
   \ialign{\hfil$\m@th#1##$\hfil\cr#2\crcr}}}
\newcommand{\charfusion}[3][\mathord]{
    #1{\ifx#1\mathop\vphantom{#2}\fi
        \mathpalette\mov@rlay{#2\cr#3}
      }
    \ifx#1\mathop\expandafter\displaylimits\fi}
\makeatother

\DeclareFontFamily{U}  {MnSymbolC}{}
\DeclareSymbolFont{MnSyC}         {U}  {MnSymbolC}{m}{n}
\DeclareFontShape{U}{MnSymbolC}{m}{n}{
    <-6>  MnSymbolC5
   <6-7>  MnSymbolC6
   <7-8>  MnSymbolC7
   <8-9>  MnSymbolC8
   <9-10> MnSymbolC9
  <10-12> MnSymbolC10
  <12->   MnSymbolC12}{}
\DeclareMathSymbol{\powerset}{\mathord}{MnSyC}{180}

\let\epsilon=\varepsilon

\let\rho=\varrho
\let\theta=\vartheta
\let\phi=\varphi

\theoremstyle{plain}
\newtheorem{thm}{Theorem}[section]
\newtheorem{theorem}[thm]{Theorem}
\newtheorem{lemma}[thm]{Lemma}

\newtheorem{proposition}[thm]{Proposition}
\newtheorem{problem}[thm]{Problem}

\newtheorem*{claim*}{Claim}

\newtheorem{thm-intro}{Theorem}[]
\newtheorem{conj-intro}[thm-intro]{Conjecture}
\newtheorem{question-intro}[thm-intro]{Question}

\theoremstyle{definition}

\newtheorem{obs}[thm]{Observation}

\newtheorem*{example*}{Example}

\makeatletter
\newcommand\thankssymb[1]{\textsuperscript{\@fnsymbol{#1}}}
\makeatother

\begin{document}

\author[M.~Hamann]{Matthias Hamann\thankssymb{3}}
\address{Matthias Hamann, University of Hamburg, Department of Mathematics, Bundesstr. 55, 20146 Hamburg, Germany}
\email{\tt matthias.hamann@math.uni-hamburg.de}
\thanks{\thankssymb{3} Funded by the Deutsche Forschungsgemeinschaft (DFG) - Project No.\ 549406527.}

\author[K.~Heuer]{Karl Heuer\thankssymb{2}}
\address{Karl Heuer, Technical University of Denmark, Department of Applied Mathematics and Computer Science, Richard Petersens Plads, Building 322, 2800 Kongens Lyngby, Denmark}
\email{\tt karheu@dtu.dk}
\thanks{\thankssymb{2} Funded by Danmarks Frie Forskningsfond (DFF) - grant ID:\ 10.46540/5281-00242B}

\title[]{On ubiquity problems in infinite digraphs}

\keywords{digraphs, infinite graphs, ubiquity, ends}

\subjclass[2020]{05C20, 05C63}

\begin{abstract}
We prove that the consistently oriented double ray is ubiquitous if and only if it is ubiquitous restricted to the class of one-ended digraphs.
Additionally, we prove the same equivalence for the disjoint union of a consistently forward and a consistently backward oriented ray. 
Furthermore, we discuss the connection between these two ubiquity problems.
\end{abstract}

\maketitle

\section{Introduction}
\label{sec:intro}

The property of a graph $H$ to be \emph{ubiquitous} describes the concept that the existence of $n$ pairwise disjoint copies for all $n\in\N$ of a graph $H$ in a graph $G$ implies that $G$ already contains $\aleph_0$ many pairwise disjoint copies of~$H$.
Originally, this was considered with respect to the subgraph relation and Halin showed that the ray is ubiquitous~\cite{Halin65}*{Satz 1} and that the double ray is ubiquitous~\cite{Halin70}*{Satz 3}.
Later, counterexamples \cites{A77, A02} regarding being ubiquitous were found, which even include trees.
Hence, ubiquity was also considered with respect to other relations such as the minor and topological minor relation, and we refer to \cites{A79,A80,BEEGHPTI,BEEGHPTII,BEEGHPTIII} for ubiquity results regarding these notions.
Many of these results leverage that finite graphs (resp.~trees) are well-quasi ordered under the minor (resp.~topological minor) relation.

It should be noted that Andreae~\cite{A02} posed the major and still open conjecture whether all locally finite connected graphs are ubiquitous under the minor relation. In the same paper he also provided an example of an uncountable non-ubiquitous graph under the minor relation, whose construction exploits that uncountable graphs are not well-quasi ordered under minors.

Further modified questions regarding ubiquity have been raised, e.g.~where disjointness is replaced by edge-disjointness.
In this setting it turned out to be complicated to even verify that double rays are ubiquitous regarding the subgraph relation~\cite{BCP15}.

For digraphs, much less is known, even for the subdigraph relation, and for the rest of this article, we shall only consider the property of being ubiquitous under that relation.
While Zuther~\cite{Z1997}*{Theorem 2.17} proved that the consistently oriented ray is ubiquitous, Gut et al.~\cite{GKR2024}*{Theorem 1.1} proved the same for all possible orientations of the undirected ray.
Later, Gut et al.~\cite{GKR2025}*{Theorem 1.3} classified almost all orientations of the undirected double ray that are ubiquitous.
The only orientation they left out was the consistently oriented one.
Due to that, they posed the following problem.

\begin{problem}\cite{GKR2024}*{Problem 1.3}\label{prob:ubiquity:doubleRays}
    Is the consistently oriented double ray ubiquitous?
\end{problem}

Our main result (Theorem~\ref{thm:main:intro:ReductionToOneEnd:doubleRays}) states that it suffices to verify Problem~\ref{prob:ubiquity:doubleRays} for one-ended digraphs.
(We refer to Section~\ref{sec:prelims} for the definition of ends of digraphs.)

\begin{theorem}\label{thm:main:intro:ReductionToOneEnd:doubleRays}
    The consistently oriented double ray is ubiquitous if and only if it is ubiquitous restricted to the class of one-ended digraphs.
\end{theorem}

As a major step in the proof of Theorem~\ref{thm:main:intro:ReductionToOneEnd:doubleRays}, we verify Problem~\ref{prob:ubiquity:doubleRays} for a large, but technical, class of digraphs, see Theorem~\ref{thm:ubiquity:doubleRays}.

Additionally, we consider the following more general version of \cite{HH2024+}*{Problem 4.2}.

\begin{problem}\label{prob:ubiquity:antiraysRays}
    Is the disjoint union of a consistently forward and a consistently backward oriented ray ubiquitous?
\end{problem}

We also verify Problem~\ref{prob:ubiquity:antiraysRays} for a large, but technical, class of digraphs (Theorem~\ref{thm:ubiquity:antiraysRays}) and show that Problem~\ref{prob:ubiquity:antiraysRays} holds if it holds for all one-ended digraphs.

\begin{theorem}\label{thm:main:intro:ReductionToOneEnd:RaysAntiRays}
    The disjoint union of a consistently forward and a consistently backward oriented ray is ubiquitous if and only if it is ubiquitous restricted to the class of one-ended digraphs.
\end{theorem}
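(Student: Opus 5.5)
The forward direction of Theorem~\ref{thm:main:intro:ReductionToOneEnd:RaysAntiRays} is trivial, since ubiquity in all digraphs implies ubiquity in one-ended digraphs. For the converse, write $H$ for the disjoint union of a forward ray $R$ and a backward ray (anti-ray) $A$. Let $D$ be a digraph containing $n$ disjoint copies of $H$ for every $n\in\N$. I want to produce infinitely many disjoint copies, either inside a single end of $D$ or by the technical result, Theorem~\ref{thm:ubiquity:antiraysRays}, which I take to handle the case where the copies are spread over several ends. The proof will be a case distinction modelled on the proof of Theorem~\ref{thm:main:intro:ReductionToOneEnd:doubleRays}.

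\emph{Step 1: concentrate the rays on ends.} Every forward ray and every anti-ray in a copy of $H$ determines an end of $D$. Fix, for each $n$, a family $\mathcal H_n$ of $n$ disjoint copies. By pigeonholing over the finitely many separations relevant at each stage, I will either find
\begin{itemize}
\item[(a)] a single end $\omega$ such that, for every $n$, there are $n$ disjoint copies of $H$ with both the ray and the anti-ray in $\omega$, or
\item[(b)] a configuration meeting the hypotheses of Theorem~\ref{thm:ubiquity:antiraysRays}. I expect these to say that there are arbitrarily many disjoint forward rays and anti-rays which are separable from each other, or which lie in ends reachable in a controlled way.
\end{itemize}
Zuther's ubiquity of forward rays, and symmetrically of anti-rays, gives infinitely many disjoint forward rays and infinitely many disjoint anti-rays. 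The remaining difficulty is combining them disjointly. If infinitely many disjoint forward rays and infinitely many disjoint anti-rays exist at all, then pairing them into disjoint copies of $H$ is essentially trivial. This is because $H$ is disconnected: any infinitely many disjoint rays together with infinitely many disjoint anti-rays disjoint from them already give infinitely many copies.

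So the real task is to find infinite families of forward rays and anti-rays that are \emph{mutually} disjoint. Because $H$ is disconnected, I expect case (b) to be comparatively easy. If a forward ray end and an anti-ray end can be separated by a finite set $S$, then I will take infinitely many forward rays on one side and infinitely many anti-rays on the other, trimming finite initial segments so that they avoid $S$. Such trimming preserves $H$.

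\emph{Step 2: reduce case (a) to a one-ended digraph.} In case (a), I will construct a one-ended subdigraph $D_\omega\subseteq D$ containing, for every $n$, $n$ disjoint copies of $H$. The natural candidate is the union over $n$ of the tails of the chosen copies, together with connecting paths between them that realise the equivalence of ends. I will remove finite pieces so that no other end survives, for instance by taking a suitable locally finite or normal-tree-like structure along $\omega$. Applying the hypothesis to $D_\omega$ then yields infinitely many disjoint copies of $H$ in $D$.

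I expect the main obstacle to be ensuring that $D_\omega$ is genuinely one-ended in the digraph sense of Section~\ref{sec:prelims}. Directed ends are defined via strong connectivity between rays and anti-rays, so the union of tails plus connecting paths can create new ends or split $\omega$. My first attempt is to build $D_\omega$ as the union of the copies' tails with, for each pair of tails, infinitely many disjoint directed connecting paths, all chosen inside the components of $D-S_k$ containing $\omega$ for an exhausting sequence of finite separators $S_k$. Any ray of $D_\omega$ is then forced to eventually live in every such component, and hence to be equivalent to $\omega$. If $D$ is uncountable or has no countable exhaustion, I will first pass to a countable subdigraph containing all the chosen copies and connecting paths, which is harmless for ubiquity.
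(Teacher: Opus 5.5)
\textbf{Overall structure.} Your plan has the same overall structure as the paper's proof. You split according to whether the copies concentrate in one end, handle the non-concentrating case with Theorem~\ref{thm:ubiquity:antiraysRays}, and in the concentrating case pass to a one-ended subdigraph.

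\textbf{Step~1.} No pigeonholing is needed here. If $(\cX_n)_{n\in\N}$ concentrates in an end $\epsilon$ with constant $N$, then $\cX_{n+N}$ contains $n$ rays and $n$ anti-rays in $\epsilon$, which is your case (a). Otherwise the hypothesis of Theorem~\ref{thm:ubiquity:antiraysRays} holds verbatim. Your own sketch of (b) could not replace that theorem. Infinitely many disjoint rays inside a single end come from Theorem~\ref{thm:endDegree}, not from Zuther's ubiquity of rays. The sketch also does not treat rays lying in infinitely many distinct ends.

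\textbf{The gap in Step~2.} Grant that every ray of $D_\omega$ has a tail in the strong component of $D-S_k$ containing $\omega$, for every $k$. This only shows that all rays and anti-rays of $D_\omega$ lie in the end $\omega$ \emph{of $D$}. The one-ended hypothesis is about $D_\omega$ as a digraph in its own right. So for every ray or anti-ray $P$ of $D_\omega$ you need infinitely many disjoint $P$--$R_1$ and $R_1$--$P$ dipaths \emph{inside} $D_\omega$. The dipaths witnessing strong connectivity of those components live in $D$ and were never added to $D_\omega$. For instance, a ray and a disjoint anti-ray, both in $\omega$, already form a two-ended subdigraph.

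The problematic elements are rays of $D_\omega$ that switch infinitely often between your chosen tails and the connecting dipaths. The tails come from different $\cX_n$ and may intersect. For such rays you give no argument internal to $D_\omega$.

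\textbf{How the paper closes it.} Lemma~\ref{lem:reduction:oneEnd} supplies exactly this structural control:
\begin{enumerate}
\item Shorten the chosen rays $R_i$ and anti-rays $Q_i$ so that no vertex lies on infinitely many of them. Your exhaustion would give this if the tails of the $n$-th family are chosen to avoid $S_n$.
\item Join consecutive $R_i,R_{i+1}$ by infinitely many dipaths in both directions, arranged like the vertical edges of the bidirected quarter-grid. Do the same for the $Q_i$ using the reversed grid.
\item Join $R_1$ and $Q_1$ by infinitely many dipaths in both directions.
\item Choose all these dipaths successively so that they are pairwise disjoint overall.
\end{enumerate}
Then every ray or anti-ray of the resulting $D'$ either meets a single $R_i$ or $Q_i$ infinitely often, or meets infinitely many of them. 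In the second case, the grid pattern together with local finiteness produces the required disjoint dipaths to and from $R_1$ within $D'$.

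Without some such control, your $D_\omega$ is not shown to be one-ended, so the reduction is incomplete as written.
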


\medskip

This paper is structured as follows. 
After introducing some terminology in Section~\ref{sec:prelims}, we prove Theorem~\ref{thm:main:intro:ReductionToOneEnd:RaysAntiRays} in Section~\ref{sec:ubiquity:antiraysRays}.
In Section~\ref{sec:ubiquity:DoubleRays}, we prove Theorem~\ref{thm:main:intro:ReductionToOneEnd:doubleRays}.
In Section~\ref{sec:equivalence}, we discuss connections between Problems~\ref{prob:ubiquity:doubleRays} and \ref{prob:ubiquity:antiraysRays}.

\section{Preliminaries}
\label{sec:prelims}

For general facts and notation regarding graphs we refer the reader to~\cite{diestel}, regarding digraphs in particular to~\cite{bang-jensen}.

For the sake of brevity we call a directed path just a \emph{dipath}.
Given a dipath $P$ containing two vertices $u, v$ such that $v$ is reached from $u$ via~$P$, we define $uPv$ as the subdipath of~$P$ starting at~$u$ and ending in~$v$.
Given two vertex sets $A$, $B$, we call a dipath $P$ an \emph{$A$--$B$ dipath} if $P$ starts at a vertex of~$A$, ends at a vertex of~$B$ and is internally disjoint from $A \cup B$.

A digraph $R$ is called a \emph{ray} if it has precisely one vertex $v$ with out-degree $1$ and in-degree~$0$, there exists a dipath from $v$ to each vertex of~$R$, and each vertex distinct from~$v$ has
out-degree $1$ and in-degree~$1$.
An \emph{anti-ray} is obtained from a ray by reversing all its edges.
The vertex $v$ is called the \emph{starting vertex} (resp.\ \emph{end vertex}) of the ray (resp.\ anti-ray).
A \emph{tail} of~$R$ is a subray of~$R$.
If this tail starts at the vertex~$x$, then we denote it by~$xR$.

Let $Q$ and~$R$ be rays or anti-rays.
We write $Q\leq R$ if there are infinitely many pairwise disjoint $Q$--$R$ dipaths and we write $Q\sim R$ if $Q\leq R$ and $R\leq Q$.
Then $\leq$ is a preorder on the set of rays and anti-rays in a digraph~$D$ and $\sim$ is an equivalence relation on that set.
The equivalence classes of~$\sim$ are the \emph{ends} of~$D$ and we can extend the relation $\leq$ to the ends: we write $\epsilon\leq\eta$ for ends $\epsilon$ and $\eta$ if there are $Q\in\epsilon$ and $R\in\eta$ with $Q\leq R$.
Note that $\epsilon\leq\eta$ if and only if $Q\leq R$ for every $Q\in\epsilon$ and $R\in\eta$.
In particular, we have $\epsilon\leq\eta$ and $\eta\leq\epsilon$ if and only if $\epsilon=\eta$.
Note that $\leq$ is an order on the ends of~$D$.

A \emph{double ray} is an infinite digraph all of whose vertices have in- and out-degree~$1$ and such that for every $v,w$ there exists either a $v$--$w$ or a $w$--$v$ dipath.
For every vertex $x$ on a double ray~$R$, the subdigraphs that are rays starting at~$x$ or anti-rays ending in~$x$ are the \emph{tails} of~$R$, more specifically, the \emph{forward} and \emph{backward tails} of~$R$, and we denote them by $xR$ and $Rx$, respectively.
If $\epsilon$ and~$\eta$ are ends and $R$ is a double ray such that some tail of~$R$ that is a ray lies in~$\eta$ and some tail of~$R$ that is an anti-ray lies in~$\epsilon$ then we call $R$ an \emph{$\epsilon$--$\eta$ double ray}.

Let $X$ and~$Y$ be disjoint sets of ends of~$D$.
We say that a vertex set $S \subseteq V(D)$ \emph{separates} $X$ from~$Y$ in~$D$ if for every $\epsilon\in Y$ every $R\in\epsilon$ has a tail~$Q$ such that every ray in elements of~$X$ that starts at a vertex of~$Q$ meets~$S$.
In case $X$ (or $Y$) is a singleton set, we ease the notation and analogously define that the end $\epsilon_X \in X$ (or $X$) is separated from $Y$ (or from the end $\epsilon_Y \in Y$).
Note that if $\epsilon\not\leq\eta$, then there exists a finite vertex set separating $\eta$ from~$\epsilon$.

We call the digraph in Figure~\ref{fig:bidirectedQuarterGrid} the \emph{bidirected quarter-grid}.
Formally, this digraph is defined as follows.
The first digraph is built from infinitely many pairwise disjoint rays $R_1=x_1^1x_2^1\ldots$, $R_2=x_1^2x_2^2\ldots$, $\ldots$ where we add edges $x_{4j+7}^ix_{4j+1}^{i+1}$ and $x_{4j+2}^{i+1}x_{4j+8}^i$ for all $j\geq 0$ and $i\geq 1$.
Finally, we suppress all vertices $v\in V(R_i)$ with $d^-(v) = d^+(v) = 1$ that have only neighbours on~$R_i$ to obtain the bidirected quarter-grid.
Furthermore, we call the digraph obtained from the bidirected quarter-grid by reversing all orientations of the edges the \emph{reversed bidirected quarter-grid}.

\begin{figure}[ht]
    \centering
\begin{tikzpicture}
    \draw[-{Latex[length=2.75mm]}] (1.5,0) -- (8.25,0);
    \foreach \x in {1.5,2,2.5,3,3.5,5,5.5,7,7.5}
        \fill (\x,0) circle (2pt);

    \draw (1,-0.3) node[anchor=south] {$R_1$};

    \draw[-{Latex[length=2.75mm]}] (2,1) -- (9.25,1);
    \foreach \x in {2,2.5,3,3.5,4,4.5,5,5.5,6,6.5,7,7.5,8,8.5}
        \fill (\x,1) circle (2pt);

    \draw (1.5,0.7) node[anchor=south] {$R_2$};

    \draw[-{Latex[length=2.75mm]}] (4,2) -- (10.25,2);
    \foreach \x in {4,4.5,6,6.5,7,7.5,8,8.5,9,9.5}
        \fill (\x,2) circle (2pt);

    \draw (3.5,1.7) node[anchor=south] {$R_3$};

    \draw[-{Latex[length=2.75mm]}] (7,3) -- (11.25,3);
    \foreach \x in {7,7.5,9,9.5,10,10.5}
        \fill (\x,3) circle (2pt);

    \draw (6.5,2.7) node[anchor=south] {$R_4$};


    \draw[-{Latex[length=2.75mm]}] (10,3) -- (10,3.75);
    \draw[-{Latex[length=2.75mm]}] (10.5,3.75) -- (10.5,3);

    \draw[-{Latex[length=2.75mm]}] (2,0) -- (2,1);
    \draw[-{Latex[length=2.75mm]}] (3,0) -- (3,1);
    \draw[-{Latex[length=2.75mm]}] (5,0) -- (5,1);
    \draw[-{Latex[length=2.75mm]}] (7,0) -- (7,1);    
    
    \draw[-{Latex[length=2.75mm]}] (2.5,1) -- (2.5,0);
    \draw[-{Latex[length=2.75mm]}] (3.5,1) -- (3.5,0);
    \draw[-{Latex[length=2.75mm]}] (5.5,1) -- (5.5,0);
    \draw[-{Latex[length=2.75mm]}] (7.5,1) -- (7.5,0);    
        

    \draw[-{Latex[length=2.75mm]}] (4,1) -- (4,2);
    \draw[-{Latex[length=2.75mm]}] (6,1) -- (6,2);

    \draw[-{Latex[length=2.75mm]}] (4.5,2) -- (4.5,1);

    \draw[-{Latex[length=2.75mm]}] (6.5,2) -- (6.5,1);

    \draw[-{Latex[length=2.75mm]}] (8,1) -- (8,2);
    \draw[-{Latex[length=2.75mm]}] (8.5,2) -- (8.5,1);    

    \draw[-{Latex[length=2.75mm]}] (7,2) -- (7,3);  
    \draw[-{Latex[length=2.75mm]}] (7.5,3) -- (7.5,2);    
    
    \draw[-{Latex[length=2.75mm]}] (9,2) -- (9,3);  
    \draw[-{Latex[length=2.75mm]}] (9.5,3) -- (9.5,2);

    \draw (8.25,0.5);
    \fill (8.25,0.5) circle (1pt);
    \draw (8.5,0.5);
    \fill (8.5,0.5) circle (1pt);
    \draw (8.75,0.5);
    \fill (8.75,0.5) circle (1pt);

    \draw (9.25,1.5);
    \fill (9.25,1.5) circle (1pt);
    \draw (9.5,1.5);
    \fill (9.5,1.5) circle (1pt);
    \draw (9.75,1.5);
    \fill (9.75,1.5) circle (1pt);

    \draw (10.25,2.5);
    \fill (10.25,2.5) circle (1pt);
    \draw (10.5,2.5);
    \fill (10.5,2.5) circle (1pt);
    \draw (10.75,2.5);
    \fill (10.75,2.5) circle (1pt);

    \draw (11.25,3.5);
    \fill (11.25,3.5) circle (1pt);
    \draw (11.5,3.65);
    \fill (11.5,3.65) circle (1pt);
    \draw (11.75,3.8);
    \fill (11.75,3.8) circle (1pt);

\end{tikzpicture}
    \caption{The bidirected quarter-grid.}
    \label{fig:bidirectedQuarterGrid}
\end{figure}
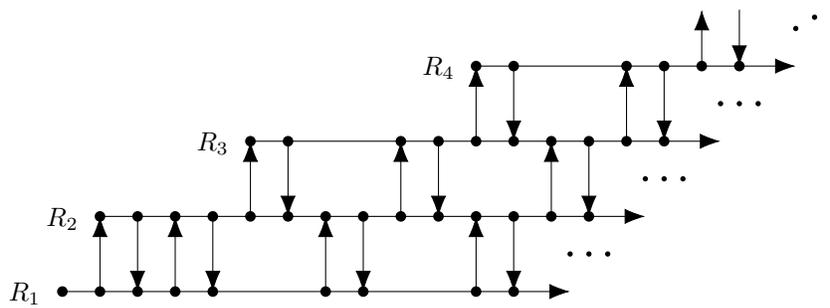

The following lemma follows directly from the structure of the (reversed) bidirected quarter-grid.

\begin{obs}\label{obs:raysInBiDirQGrid}
    Let $D$ be the subdivision of a (reversed) bidirected quarter-grid and let $U$ be an infinite vertex set in~$D$.
    Then there exist infinitely many pairwise disjoint (anti-)rays in~$D$ such that their end vertices lie in~$U$.\qed
\end{obs}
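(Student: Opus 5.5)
It suffices to treat the bidirected quarter-grid $D_0$ itself. For the reversed grid, reversing all edges turns rays ending in $U$ into anti-rays ending in $U$. Also, in a subdivision $D$ of $D_0$ every vertex of $U$ either is a branch vertex or lies on a subdivided edge $e$, and in the latter case every ray of $D$ that starts at a subdivision vertex of $e$ first runs along $e$ to its head and then continues in the image of $D_0$. So I replace each $u\in U$ by the head $h(u)$ of the edge of $D_0$ it subdivides, or by $u$ itself if it is a branch vertex. Since each edge has only finitely many subdivision vertices and each vertex of $D_0$ has finite degree, $h(U)$ is again infinite. Disjoint rays in $D_0$ starting at distinct vertices of $h(U)$ lift to rays in $D$ starting in $U$: prepend the segment of the subdivided edge. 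The lifts stay disjoint if, whenever $h(u)=h(u')$, we keep only one of them. (For rays "end vertex'' should be read as starting vertex, as in the definitions of Section~\ref{sec:prelims}.)

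Now work in $D_0$ with its rows $R_1,R_2,\dots$, where consecutive rows are joined by infinitely many edges in both directions. Let $U$ be infinite. There are two cases.

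\textbf{Case 1: some row $R_i$ contains infinitely many vertices of $U$.} Choose $u_1,u_2,\dots\in U\cap V(R_i)$ far apart along $R_i$. Route the ray starting at $u_k$ along $R_i$ for a short stretch, then up through rows $R_{i+1},\dots,R_{i+k}$ using the up-edges, and then let it continue forever along $R_{i+k}$, so each ray finally lives in its own row. To keep the rays disjoint, use a staircase pattern. If the $u_k$ are spaced far enough apart and $R_i$ between consecutive up-edges is long, then the ray from $u_k$ climbs to row $i+k$ before the ray from $u_{k+1}$ reaches that horizontal position. The ray from $u_{k+1}$ climbs past rows $i+1,\dots,i+k$ strictly to the right of where ray $k$ left them, because each up-edge $x^{i}_{4j+7}x^{i+1}_{4j+1}$ goes rightwards in index. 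Ray $k+1$ then takes row $i+k+1$ to the right of all earlier activity. This is an inductive construction: having built rays $1,\dots,k$, only finitely many vertices of the rows $\le i+k$ lie to the left of some bound, and we pick $u_{k+1}$ to the right of it.

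\textbf{Case 2: every row contains only finitely many vertices of $U$.} Then $U$ meets infinitely many rows. Choose $u_k\in U\cap R_{i_k}$ with $i_1<i_2<\cdots$, and let ray $k$ be the tail $u_kR_{i_k}$. These rays lie in distinct rows and are therefore disjoint.

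The main obstacle is the index bookkeeping in Case 1. One has to check from the explicit edges $x^i_{4j+7}x^{i+1}_{4j+1}$ that the climbing paths of different rays do not cross, which is where the "far to the right'' choice is used. After that the observation follows directly.
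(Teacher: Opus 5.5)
Your reduction to the unsubdivided grid via heads of subdivided edges is fine, and so are the reversal argument and Case~2. The construction in Case~1, however, fails outright, and the problem is not just bookkeeping.

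In the drawing of Figure~\ref{fig:bidirectedQuarterGrid} every edge either runs forward along a row or is vertical: $x^i_{4j+7}$ sits directly below $x^{i+1}_{4j+1}$. (In terms of indices the up-edge goes to the \emph{smaller} index $4j+1$, so your remark that it goes rightwards in index is also off.) Hence every dipath moves weakly to the right.

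Now look at your scheme. Ray~$1$ must leave $R_i$ before reaching $u_2$, enters $R_{i+1}$, and then stays on $R_{i+1}$ forever. So it occupies all of $R_{i+1}$ to the right of its entry point. The ray from $u_2$ can only reach $R_{i+1}$ to the right of $u_2$, that is, on ray~$1$, so it can never climb to $R_{i+2}$. The step ``ray $k+1$ climbs past rows $i+1,\dots,i+k$ strictly to the right of where ray $k$ left them'' is exactly where this breaks. Ray $j$ never leaves $R_{i+j}$, yet ray $k+1$ has to cross all of $R_{i+1},\dots,R_{i+k}$.

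No variant in which each ray eventually stays in a single row can work either. The grid is planar and dipaths are monotone, so two disjoint rays never change their vertical order once both have started. Take $U=V(R_1)$. A ray starting to the left of $u_m$ must pass the position of $u_m$ strictly above $R_1$, and hence stays above the ray from $u_m$. So the eventual rows of infinitely many such rays would form an infinite strictly decreasing sequence of positive integers, which is impossible.

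The rays must therefore climb forever, along parallel diagonals. For example, using indices of the unsuppressed grid:
\begin{itemize}
\item Let ray $k$ run along $R_i$ from $u_k$ to $x^i_{4j_k+7}$, where $x^i_{4j_k+7}$ is not before $u_k$.
\item Then, for every $r>i$, let it use exactly the segment $x^r_{4j_k+1}\dots x^r_{4j_k+7}$ of $R_r$, followed by the up-edge $x^r_{4j_k+7}x^{r+1}_{4j_k+1}$.
\item Choose $u_{k+1}$ on $R_i$ beyond $x^i_{4j_k+11}$. This forces $j_{k+1}\geq j_k+2$, and then the rays are pairwise disjoint.
\end{itemize}
Always taking the first available up-edge does not work: those paths drift to the beginnings of the rows and all merge into $x^r_1\dots x^r_7x^{r+1}_1\dots$.

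With Case~1 replaced by this construction, your proof of the observation goes through. The paper itself states the observation without proof.
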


A sufficient condition for the existence of (reversed) quarter-grids was given in~\cite{HH2024+b}.

\begin{thm}\label{thm:grids}\cite{HH2024+b}*{Theorem 1.2}
    If $D$ is a digraph that contains an end~$\epsilon$ with infinitely many disjoint (anti-)rays, then there exists a subdivision of the (reversed) bidirected quarter-grid in~$D$ with all its (anti-)rays in~$\epsilon$.
\end{thm}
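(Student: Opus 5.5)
The plan is to build the subdivided quarter-grid by a diagonal induction, adding one new ray at each stage. I treat the case of rays; the anti-ray case then follows by reversing all edges of~$D$, since reversal exchanges rays with anti-rays and the quarter-grid with the reversed quarter-grid, and preserves ends.

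Fix infinitely many pairwise disjoint rays $R_1,R_2,\ldots\in\epsilon$. The key tool is the equivalence $R_i\sim R_j$ for all $i,j$: it gives infinitely many disjoint $R_i$--$R_j$ dipaths in both directions. After stage~$n$ I want the following data:
\begin{itemize}
\item rays $Q_1,\dots,Q_n\in\epsilon$ together with one further ray $Q_{n+1}^*\in\epsilon$;
\item a finite subdigraph $F_n$, consisting of initial segments of $Q_1,\dots,Q_n$ and finitely many connecting dipaths that realise the required pattern of upward edges $x^i_{4j+7}x^{i+1}_{4j+1}$ and downward edges $x^{i+1}_{4j+2}x^i_{4j+8}$;
\item infinitely many disjoint rays of~$\epsilon$ that avoid $F_n$ and the tails of the $Q_i$, kept in reserve.
\end{itemize}
At each stage I extend every current ray segment by one more "rung pair" in order of index. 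The diagonal schedule ensures that in the limit every $Q_i$ is a full ray carrying infinitely many rungs to $Q_{i-1}$ and to $Q_{i+1}$. The edges of the grid then become disjoint dipaths, so the union is a subdivision of the bidirected quarter-grid. All its rays lie in~$\epsilon$, because each $Q_i$ was chosen in~$\epsilon$.

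The individual step goes as follows. To add an upward rung from $Q_i$ to $Q_{i+1}$ beyond the current finite part, I take one of the infinitely many disjoint $Q_i$--$Q_{i+1}$ dipaths that avoids the finite set $V(F_n)$; such a dipath exists because only finitely many of them can meet a finite set. The trouble is that this dipath may first hit some other ray $Q_k$, or a reserve ray. When that happens I reroute: I take the first vertex where the dipath meets the union of the current rays, and either use that segment as a rung or exchange tails. Exchanging tails means replacing the tail of the hit ray by a tail obtained from the dipath followed by a tail of $Q_{i+1}$. The exchanged tail stays in~$\epsilon$, since a tail of a ray in~$\epsilon$ remains in~$\epsilon$. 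Downward rungs are handled symmetrically, using the $Q_{i+1}$--$Q_i$ dipaths.

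The main obstacle is that this rerouting can change the rays infinitely often. To control it, I would mimic Halin's grid theorem with a Menger-type linkage argument. I would apply Menger's theorem for digraphs between a late segment of $Q_i$ and a tail of $Q_{i+1}$, inside the digraph with $V(F_n)$ deleted, to obtain, for any $m$, $m$ disjoint connecting dipaths. These must be arranged so that only finitely many of the $n+1$ relevant rays are touched. Then I would fix each vertex once it enters $F_n$, so that each initial segment is never changed again afterwards. This makes the limit well defined: every $Q_i$ stabilises vertex by vertex and is therefore a genuine ray. I expect the hardest point to be showing that the rerouted rays remain pairwise disjoint and that each still receives infinitely many rungs in both directions to its neighbours. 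If the direct rerouting gets stuck, my fallback is to pass to a subsequence of the rays, chosen as a path in an auxiliary digraph on the indices. In that digraph $i\to j$ whenever infinitely many disjoint dipaths run from $R_i$ to $R_j$ internally avoiding all other rays among finitely many chosen ones. Strong connectivity of this auxiliary digraph would come from $R_i\sim R_j$.
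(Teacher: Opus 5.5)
The paper does not prove this statement; it quotes it from the authors' earlier work. So your sketch has to stand on its own. Your reduction of the anti-ray case by reversing all edges is fine, but the sketch has a genuine gap: it stops exactly at the step that carries the whole proof.

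Equivalence gives infinitely many disjoint $Q_i$--$Q_{i+1}$ dipaths, hence ones avoiding the finite set $V(F_n)$. A rung, however, must also avoid the \emph{infinite} tails of all other grid rays, and nothing in your setup provides such dipaths. For a labelling fixed in advance they need not exist. For instance, let $D$ consist of disjoint rays $R_1,R_2,\ldots$ together with infinitely many disjoint edges from $R_i$ to $R_{i+1}$ and from $R_{i+2}$ to $R_i$ for every $i$. All $R_i$ are equivalent, but every dipath leaving $R_{i+1}$ does so along an edge into $R_{i+2}$ or $R_{i-1}$. So among $R_1,\ldots,R_n$ the only pair joined in both directions by infinitely many disjoint dipaths avoiding the other rays is $R_{n-1},R_n$, and that link dies as soon as $R_{n+1}$ is added. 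A subdivided grid does exist here: take the odd rays as grid rays, route each rung $R_{2k-1}\to R_{2k+1}$ through a finite segment of the \emph{sacrificed} ray $R_{2k}$, and let each rung $R_{2k+1}\to R_{2k-1}$ be a single edge.

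Your repairs cannot produce this. Cutting a $Q_i$--$Q_{i+1}$ dipath at its first vertex on some $Q_k$ gives a $Q_i$--$Q_k$ dipath. That is useless as the upward rung you need, and unless $k=i-1$ it cannot be placed in the grid at all. The tail exchange as you describe it gives $Q_k$ the tail of $Q_{i+1}$, so two grid rays then share a tail, and you give no way to supply $Q_{i+1}$ with a new disjoint tail. The Menger step only reproduces what $\sim$ already gives. Your condition that ``only finitely many of the $n+1$ relevant rays are touched'' is vacuous.

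The fallback has the same defect. The arcs of your auxiliary digraph depend on the finite family currently chosen, and, as the example shows, they disappear when rays are added. So there is no fixed digraph on the indices in which to pick a path. Strong connectivity for a fixed finite family is indeed true: an out-closed proper subfamily would be left by infinitely many of the disjoint connecting dipaths, and pigeonhole gives an arc leaving it. But strong connectivity, or a directed path, yields only one-way links. The bidirected grid needs two-way links between consecutive rays that avoid all other rays.

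What is missing is a mechanism that decides which rays become grid rays and which are cut into segments carrying subdivided rungs. It must come with an invariant that survives adding the next ray and guarantees that each grid ray still receives infinitely many rungs in both directions to its neighbours.

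Finally, your claim that all rays lie in $\epsilon$ ``because each $Q_i$ was chosen in $\epsilon$'' needs each $Q_i$ to be rerouted only finitely often, which you explicitly do not control. A vertex-wise limit of rays in $\epsilon$ need not lie in $\epsilon$. This point is easily repaired, e.g.\ by never rerouting $Q_1$ and using that the rungs make consecutive grid rays equivalent.
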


We will need the following results later.

\begin{lemma}\label{prop:rays+anti=double}\cite{HH2024+}*{Proposition 4.4}
For every $n\in\mathbb N$, if a digraph $D$ contains a set of $n$ rays and $n$ anti-rays that are all pairwise disjoint and lie in the same end, then there exists a set of $n$ pairwise disjoint double rays in~$D$ all of whose tails lie in that end.
\end{lemma}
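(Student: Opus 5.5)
The idea is to reduce the statement to a finite Menger-type argument inside a suitable finite subdigraph, where we fix the attachment points in advance. Then the infinite tails of the chosen anti-rays and rays are automatically disjoint from everything else we use. Let $A_1,\dots,A_n$ be the anti-rays and $R_1,\dots,R_n$ the rays, all pairwise disjoint and in the common end~$\epsilon$. A double ray is made of a backward tail, then a dipath, then a forward tail. So we want $n$ disjoint dipaths, each starting in some $A_j$ and ending in some $R_i$, with the tails attached to them disjoint from each other and from the paths.

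\textbf{Step 1: choosing connectors.} Since all $A_j$ and $R_i$ lie in~$\epsilon$, we have $A_j\le R_i$ for all $i,j$. So there are infinitely many pairwise disjoint $A_j$--$R_i$ dipaths for every ordered pair $(j,i)$. Enumerate the $n^2$ pairs. For each pair in turn, choose $n$ pairwise disjoint $A_j$--$R_i$ dipaths that avoid all finitely many dipaths chosen before; this is possible because only finitely many of the infinitely many disjoint candidates meet a given finite set. Let $\mathcal C$ be the resulting finite family of connectors. It is pairwise disjoint, and every connector is internally disjoint from $\bigcup_j A_j\cup\bigcup_i R_i$ by the definition of an $A$--$B$ dipath.

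\textbf{Step 2: the finite digraph.} Now choose a vertex $a_j$ on $A_j$ such that every vertex of $A_j$ used by a connector lies on the finite subdipath $P_j$ of $A_j$ from $a_j$ to the end vertex of~$A_j$. Likewise choose $r_i$ on $R_i$ so that every vertex of $R_i$ used by a connector lies on the finite subdipath $Q_i$ from the starting vertex of $R_i$ to~$r_i$. Let $H$ be the finite digraph $\bigcup_j P_j\cup\bigcup_i Q_i\cup\bigcup\mathcal C$. Apply Menger's theorem in $H$ between $X=\{a_1,\dots,a_n\}$ and $Y=\{r_1,\dots,r_n\}$.

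\textbf{Step 3: the separator argument (main obstacle).} Let $S\subseteq V(H)$ with $|S|<n$. Then some $P_j$ and some $Q_i$ avoid $S$. Among the $n$ disjoint $A_j$--$R_i$ connectors in $\mathcal C$, at least one avoids~$S$. Hence $a_jP_j$ followed by that connector and then the segment of $Q_i$ up to $r_i$ is an $X$--$Y$ walk in $H-S$, and it contains a dipath. So no set of fewer than $n$ vertices separates $X$ from $Y$ in~$H$, and Menger yields $n$ disjoint $X$--$Y$ dipaths $W_1,\dots,W_n$ in~$H$. Since $|X|=|Y|=n$, each $W_k$ starts at some $a_j$ and ends at some $r_i$, and these form bijections.

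\textbf{Step 4: assembling the double rays.} For each $k$, let $D_k$ be the backward tail of $A_j$ ending in $a_j$, followed by $W_k$, followed by the tail $r_iR_i$. The tail of $A_j$ meets $H$ only in $a_j$, and $r_iR_i$ meets $H$ only in $r_i$. Hence the $D_k$ are double rays, they are pairwise disjoint, and their tails lie in~$\epsilon$ as tails of $A_j$ and~$R_i$.

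The point needing most care is Step 3. The connectors must be fixed before the segments $P_j,Q_i$ are truncated, so that $H$ is finite while $n$ disjoint connectors per pair survive inside it. Otherwise a small separator could cut all available connections between the surviving segments.
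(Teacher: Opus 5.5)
The paper does not prove this lemma; it is quoted from Hamann--Heuer's earlier paper, so there is no argument in the text to compare against. Your proof is correct and self-contained. The key design choice is sound: fix the connectors first, truncate to a finite digraph $H$, then apply finite Menger between $\{a_1,\dots,a_n\}$ and $\{r_1,\dots,r_n\}$. This avoids the failure of a naive greedy pairing, where every $A_2$--$R_2$ dipath might be forced through an infinite tail of the first double ray.

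One remark in Step 1 is false as written. By the paper's definition, an $A_j$--$R_i$ dipath is internally disjoint only from $A_j\cup R_i$. It may pass through other anti-rays $A_{j'}$ or rays $R_{i'}$, possibly far out along them.

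This does no harm, provided that in Step 2 you read ``used by a connector'' as ``lying anywhere on a connector'', not just as an endpoint. That is, choose $a_j$ and $r_i$ so that $P_j$ and $Q_i$ contain every vertex of $A_j$ and $R_i$ that lies on any connector in $\mathcal C$. This is possible because $\bigcup\mathcal C$ is finite. It is exactly what Step 4 needs for the claims that $A_ja_j$ meets $H$ only in $a_j$ and that $r_iR_i$ meets $H$ only in $r_i$. So you should simply drop the parenthetical claim.

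A small simplification: Step 3 only uses that the $n$ connectors for one fixed pair $(j,i)$ are pairwise disjoint. Making connectors for different pairs disjoint from each other is not needed.
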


\begin{thm}\label{thm:endDegree}\cite{HH2024+}*{Theorem 3.1}
    Let $D$ be a digraph.
    If an end of~$D$ contains n pairwise disjoint rays (anti-rays) for all $n\in\N$, then it contains countably infinitely many pairwise disjoint rays (anti-rays).
\end{thm}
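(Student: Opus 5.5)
The plan is to adapt Halin's classical argument for the undirected end-degree to digraphs. We treat rays; the anti-ray case follows by reversing all edges, which swaps rays with anti-rays and preserves the ends. We build the infinite family as a limit. By induction on $k$, we construct $k$ pairwise disjoint rays $P^k_1,\dots,P^k_k$ in the end~$\epsilon$, together with finite initial dipaths $I^k_i\subseteq P^k_i$. We require two properties. First, $I^{k}_i$ is a proper initial segment of $I^{k+1}_i$ and $P^{k+1}_i$ starts with $I^{k+1}_i$. Second, for each fixed~$i$ the lengths of the $I^k_i$ tend to infinity as $k\to\infty$. Then $P_i:=\bigcup_{k\ge i} I^k_i$ is a ray, and these rays are pairwise disjoint because disjointness is witnessed at a finite stage. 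Each $P_i$ lies in~$\epsilon$: at every stage the current ray lies in~$\epsilon$, and we will arrange that some tail of $P_i$ is joined to rays of~$\epsilon$ by infinitely many disjoint dipaths in both directions. If this is delicate, we fix in advance one ray $R\in\epsilon$ and demand at stage $k$ that $I^{k}_i$ contains the endpoints of $k$ new disjoint $P_i$--$R$ and $R$--$P_i$ dipaths. This guarantees $P_i\sim R$ in the limit.

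For the induction step from $k$ to $k+1$, let $S$ be the finite union of the fixed segments $I^k_i$ together with the auxiliary connecting dipaths mentioned above. By hypothesis we take a family $\mathcal Q$ of $N$ pairwise disjoint rays in~$\epsilon$, where $N\ge |S|+2k+1$. Deleting those rays of $\mathcal Q$ that meet~$S$ leaves at least $2k+1$ rays avoiding~$S$; call this family $\mathcal Q'$. We then want $k$ disjoint dipaths with the following properties: each starts on the tail of $P^k_i$ after $I^k_i$, each ends on a distinct ray of $\mathcal Q'$, all avoid~$S$, and they leave at least one ray of $\mathcal Q'$ untouched. Given such dipaths, we form the new rays by following $P^k_i$ up to the start of its dipath, then the dipath to its first hit of $\bigcup\mathcal Q'$, then the tail of that $\mathcal Q'$-ray. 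A free ray of $\mathcal Q'$ with a tail chosen after all these finitely many dipaths serves as $P^{k+1}_{k+1}$. Finally we enlarge each $I^{k+1}_i$ by one further edge, plus the auxiliary dipaths to~$R$, which is possible since all new rays lie in~$\epsilon$.

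The main obstacle is producing this linkage: the dipaths must be disjoint, take their first hits on distinct rays, and still leave one ray free. We plan to use directed Menger's theorem in a suitable finite subdigraph. Suppose there are no $k$ disjoint dipaths from $X=\bigcup_i V(\text{tail of }P^k_i)$ to $Y=\bigcup\mathcal Q'$ in $D-S$. Then a finite separator $T$ with $|T|<k$ exists. Some tail of some $P^k_i$ and all but fewer than $k$ of the rays in $\mathcal Q'$ avoid~$T$. Since $P^k_i\le Q$ for every $Q\in\mathcal Q'$, there are infinitely many disjoint $P^k_i$--$Q$ dipaths, and one of them avoids the finite set $S\cup T$, which is a contradiction.

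To make Menger applicable and to force distinct landing rays, we apply it to finite truncations. We choose initial segments of the $P^k_i$-tails and of the $\mathcal Q'$-rays long enough to contain a witness dipath for every relevant pair. Then we use a standard rerouting or pigeonhole argument, with the slack $|\mathcal Q'|\ge 2k+1$, so that the $k$ paths end on $k$ distinct rays and at least one ray of $\mathcal Q'$ stays free.
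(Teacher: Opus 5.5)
The paper does not prove this statement; it quotes it from \cite{HH2024+} (Theorem~3.1 there), so your proposal has to stand on its own. The skeleton is fine. The nested segments $I^k_i$ give pairwise disjoint rays in the limit, and forcing $I^k_i$ to contain endpoints of ever more disjoint dipaths to and from a fixed $R\in\epsilon$ does give $P_i\sim R$. Nothing has to avoid these auxiliary dipaths, so there is no need to put them into $S$.

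The gap is the linkage step. You rightly call it the main obstacle, but you never carry it out. Your Menger argument, with $X$ the union of the tails of the $P^k_i$ and $Y=\bigcup\mathcal Q'$, only produces $k$ disjoint $X$--$Y$ dipaths. Several of them may start on the same $P^k_i$, leaving some $P^k_j$ with no continuation, and several may end on the same ray of $\mathcal Q'$. Even with good dipaths your recombination can fail, because the rays of $\mathcal Q'$ only avoid $S$. The $\mathcal Q'$-tail used by one new ray may cross the stretch of $P^k_j$ between $I^k_j$ and the start of its dipath, which another new ray reuses. The appeal to ``a standard rerouting or pigeonhole argument'' does not fill this; tricks that walk backwards along a ray are not available for dipaths. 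A truncation with just one witness dipath per pair also does not support a separator argument, since a single vertex may meet all the witnesses.

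Your truncation idea is the right one; what is missing is the choice of terminals, after which no rerouting is needed. Let $a_i$ be the successor of the last vertex of $I^k_i$ on $P^k_i$, let $F:=\bigcup_i V(I^k_i)$, and let $\mathcal Q'$ consist of $k+1$ disjoint rays of $\epsilon$ avoiding $F$. Build a finite digraph $H\subseteq D-F$ from three kinds of pieces:
\begin{enumerate}
\item for each $i$ and each $Q\in\mathcal Q'$, $k$ disjoint $a_iP^k_i$--$Q$ dipaths in $D-F$;
\item for each $i$, a finite initial segment of $a_iP^k_i$ containing their first vertices;
\item for each $Q$, the initial segment of $Q$ up to a vertex $w_Q$ lying beyond every vertex of $Q$ on the previous pieces.
\end{enumerate}
Put $A:=\{a_1,\dots,a_k\}$ and $B:=\{w_Q\mid Q\in\mathcal Q'\}$. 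Any set $T$ of fewer than $k$ vertices misses some $P^k_i$-segment, some $Q$-segment and one of the $k$ dipaths joining them. Hence $H-T$ contains a walk from $a_i$ to $w_Q$, and $T$ does not separate $A$ from $B$. Menger's theorem in the finite digraph $H$ then yields $k$ disjoint $A$--$B$ dipaths, which necessarily start at distinct $a_i$ and end at distinct $w_Q$. Now $I^k_i$ followed by the dipath from $a_i$ and the tail $w_QQ$ gives $k$ disjoint rays in $\epsilon$, because $H$ avoids $F$ and no vertex of $Q$ after $w_Q$ lies in $H$. The tail $w_QQ$ of an unused $Q$ serves as $P^{k+1}_{k+1}$. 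With this lemma your induction goes through, and $k+1$ rays in $\mathcal Q'$ already suffice.
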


The last result that we need is by Zuther \cite{Z1998}.

\begin{thm}\label{thm:Zuther:grids}\cite{Z1998}*{Theorem 3.1}
    Let $D$ be a digraph and let $(\epsilon_i)_{i\in\N}$ be a strictly increasing sequence of ends in~$D$ each of which contains rays (anti-rays).
    Then there exists the supremum $\epsilon$ of this sequence and it contains infinitely many pairwise disjoint rays (anti-rays).
\end{thm}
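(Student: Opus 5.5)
The plan is to choose one ray in each end and build, by a diagonal construction, infinitely many pairwise disjoint ``staircase'' rays that climb through all the chosen rays. We then show that these staircase rays lie in a common end, that this end is an upper bound of the sequence, and that it is the least upper bound. The anti-ray case follows by reversing all edges, so only rays are treated.

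\textbf{Setup and construction.} For every $i\in\N$ fix a ray $R_i\in\epsilon_i$. Since $\epsilon_i\leq\epsilon_{i+1}$, there are infinitely many pairwise disjoint $R_i$--$R_{i+1}$ dipaths. Given any finite set $F$ of vertices, only finitely many of these dipaths meet $F$. Moreover, every $R_j$ has a tail avoiding $F$.

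We build rays $S_1,S_2,\dots$ in stages; at stage~$n$ each of $S_1,\dots,S_n$ is a finite dipath ending on $R_n$.
\begin{enumerate}[(i)]
\item At stage $n+1$, let $F$ be the finite set of vertices used so far.
\item Extend each $S_k$ with $k\leq n$ by a segment of $R_n$ and then an $R_n$--$R_{n+1}$ dipath avoiding $F$ and all earlier choices of this stage. The dipath is chosen to start far out on $R_n$, so that the connecting segment of $R_n$ lies in a tail of $R_n$ avoiding $F$.
\item Start $S_{n+1}$ as a fresh vertex on a tail of $R_{n+1}$ avoiding everything used so far.
\end{enumerate}
Each $S_k$ only ever adds finite pieces that avoid previously used vertices. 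Hence the $S_k$ are rays and they are pairwise disjoint.

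\textbf{Common end and upper bound.} To make the $S_k$ equivalent, we arrange the entry points on $R_{n+1}$ so that the segment of $R_{n+1}$ between two staircases supplies an $S_k$--$S_l$ dipath. We alternate the order of the entry points between even and odd stages, so that such dipaths occur infinitely often in both directions and are disjoint, and we reserve these segments in $F$. Then all $S_k$ lie in one end~$\epsilon$. Since $S_1$ contains arcs of $R_j$ for all large $j$, and $R_i\leq R_j$ for $j\geq i$, we get $R_i\leq S_1$. The construction has to ensure infinitely many disjoint $R_i$--$S_1$ dipaths; to do so we pick $R_i$--$R_j$ dipaths that end inside the arcs used by $S_1$, which can be arranged at stage~$j$. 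Thus $\epsilon_i\leq\epsilon$ for every~$i$, and $\epsilon$ contains infinitely many disjoint rays.

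\textbf{Main obstacle: minimality.} Let $\eta\geq\epsilon_i$ for all~$i$ with a ray $T\in\eta$; we must show $\epsilon\leq\eta$. Suppose not. Then by the remark in Section~\ref{sec:prelims} some finite set $X$ separates $\eta$ from~$\epsilon$. The staircases have long arcs on $R_n$ for large $n$, and $R_n\leq T$ gives infinitely many disjoint $R_n$--$T$ dipaths, only finitely many of which meet~$X$. The difficulty is that these dipaths need not start on the finite arc that $S_1$ uses on~$R_n$. I would first try to avoid this by making the arcs of $S_1$ on $R_n$ arbitrarily long: at stage $n$, follow $R_n$ for a long stretch before jumping, interleaving the other staircases correspondingly. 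I would then argue via the separation definition, which only needs rays in $\eta$ starting on a tail of $S_1$ to meet~$X$, together with a tail of $R_n$ lying in $S_1$-reachable territory. If this fails, the fallback is to define $\epsilon$ abstractly as the end of limits of such staircases and show that any two candidate upper bounds compare.
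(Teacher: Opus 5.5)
\textbf{Gap 1: minimality is not proved.} The paper only quotes this theorem from Zuther, so your argument has to stand on its own, and at present it does not. The least-upper-bound property is the real content of ``the supremum exists'', and you leave it open. Lengthening the arcs cannot settle it: $\eta$ and $T$ are arbitrary and unknown while $S_1$ is built, so the disjoint $R_n$--$T$ dipaths may all start beyond whatever finite arcs you chose.

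The missing ingredient is a normalisation you never make: take the $R_i$ pairwise disjoint. This is possible because rays in distinct ends share only finitely many vertices, since infinitely many common vertices would give infinitely many disjoint trivial dipaths in both directions. So each $R_i$ may be replaced by a tail avoiding $R_1\cup\dots\cup R_{i-1}$. Minimality is then short:
\begin{enumerate}
\item Let $\eta\geq\epsilon_i$ for all~$i$, let $T\in\eta$, and let $X$ be finite.
\item Only finitely many $R_n$ meet~$X$, so pick $n$ with $R_n\cap X=\emptyset$ such that $S_1$ meets $R_n$, say in~$x$.
\item All but finitely many of the disjoint $R_n$--$T$ dipaths avoid $X$ and start on $xR_n$. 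For such a dipath $P$ with first vertex~$y$, the dipath $xR_ny\cup P$ contains an $S_1$--$T$ dipath avoiding~$X$.
\item Letting $X$ be the union of the dipaths found so far yields infinitely many disjoint $S_1$--$T$ dipaths, so $\epsilon\leq\eta$.
\end{enumerate}
So the $R_n$--$T$ dipaths need not start on the arc of~$S_1$. The part of $R_n$ beyond the arc is reachable from~$S_1$; this is what your ``$S_1$-reachable territory'' was aiming at. Disjointness for different $n$ comes from the disjointness of the~$R_n$. The argument applies to every ray meeting infinitely many~$R_n$. Hence, once each $S_k$ is known to be an upper bound, all $S_k$ lie in the unique supremum, and your alternating-entry device is unnecessary.

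\textbf{Gap 2: the construction is infeasible as written.} At stage $n+1$ the heads of $S_1,\dots,S_n$ are distinct vertices of the single ray~$R_n$. If a head $b$ precedes another head $b'$, the segment of $R_n$ from $b$ to a far-out vertex runs through $b'\in V(S_{k'})$, so it does not lie in a tail avoiding~$F$. Hence every staircase except the one with the last head must leave $R_n$ within a finite window, where no $R_n$--$R_{n+1}$ dipath need start. Moreover, dipaths chosen later may cross $R_n$ in front of a head that has not moved yet.

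The upper bound needs this control too, because containing arcs of all $R_j$ does not give $R_i\leq S_1$. Take disjoint rays $R_i=r_i^1r_i^2\dots$ with edges $r_i^k\to r_{i+1}^{2k}$. The ray $r_1^1r_1^2r_2^4r_2^5r_3^{10}r_3^{11}\dots$ meets every $R_j$ in an arc, yet no dipath from $r_1^k$ with $k\geq3$ meets it.

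One repair is to run the staircases as a pipeline:
\begin{enumerate}
\item Each ray carries at most one head at a time, and $S_k$ jumps from $R_a$ to $R_{a+1}$, landing beyond all used vertices.
\item Before leaving~$R_a$, $S_k$ runs past the landing points of fresh $R_i$--$R_a$ dipaths for all $i<a$, chosen after its head; these witness $R_i\leq S_k$.
\item Each connecting dipath avoids, for every other ray $R_c$ carrying a head, a finite set meeting all $R_a$--$R_c$ dipaths (if $c<a$) or all $R_c$--$R_{a+1}$ dipaths (if $c>a+1$). Such a set exists since $\epsilon_b\not\leq\epsilon_c$ for $b>c$. A dipath avoiding it misses $R_c$ entirely, as an initial or final subdipath would otherwise be a forbidden one.
\end{enumerate}

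\textbf{The anti-ray reduction.} Reversing all edges also reverses~$\leq$, so it does not reduce the anti-ray case to the ray case. It only gives the dual statement for strictly decreasing sequences and infima. Read literally, the parenthetical version is false. Take disjoint anti-rays $A_i=\dots a_i^3a_i^2a_i^1$ with edges $a_i^k\to a_{i+1}^k$. The ends of the $A_i$ increase strictly, but their upper bounds are exactly the ends of the rays $a_1^ka_2^ka_3^k\dots$. These decrease strictly in~$k$, so there is no least upper bound.
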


\section{On the ubiquity for a pair of an anti-ray and a ray}\label{sec:ubiquity:antiraysRays}

In this section, we will verify a special case of Problem~\ref{prob:ubiquity:antiraysRays} and obtain Theorem~\ref{thm:main:intro:ReductionToOneEnd:RaysAntiRays} as a corollary.
For this, we will need the following definition.
We say that a sequence $(\cX_n)_{n\in\N}$, where, for every $n\in\N$, $\cX_n$ is a set of~$n$ rays and $n$ anti-rays in a digraph $D$ that are all pairwise disjoint, \emph{concentrates} in an end $\epsilon$ of~$D$ if there exists $N\in\N$ such that there are at most $N$ rays and at most $N$ anti-rays in each~$\cX_n$ that do not lie in~$\epsilon$.

\begin{thm}\label{thm:ubiquity:antiraysRays}
    Let $D$ be a digraph.
    If there is a sequence $(\cX_n)_{n\in\N}$ such that, for every $n\in\N$, the set $\cX_n$ consists of $n$ rays and $n$ anti-rays of~$D$ that are all pairwise disjoint and such that the sequence $(\cX_n)_{n\in\N}$ does not concentrate in any end of~$D$, then $D$ contains a set of infinitely many rays and infinitely many anti-rays every two of which are disjoint.
\end{thm}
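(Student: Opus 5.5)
We argue by a case analysis on where the rays and the anti-rays of the sets $\cX_n$ accumulate; the goal is either to find infinitely many rays and anti-rays directly, or to land in a situation already settled by Theorem~\ref{thm:endDegree}, Theorem~\ref{thm:grids} and Theorem~\ref{thm:Zuther:grids}.

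\textbf{Case 1: some end $\epsilon$ of~$D$ contains arbitrarily many disjoint rays and arbitrarily many disjoint anti-rays.}
Theorem~\ref{thm:endDegree} gives infinitely many disjoint rays and infinitely many disjoint anti-rays in~$\epsilon$. Theorem~\ref{thm:grids} then yields a subdivided bidirected quarter-grid~$G$ and a reversed one~$G'$, both in~$\epsilon$. We combine them greedily. Since $G'$ is locally finite and its anti-rays lie in~$\epsilon$, we can pick infinitely many disjoint anti-rays of~$G'$ in a way that leaves infinitely many rays of~$G$ untouched; this uses Observation~\ref{obs:raysInBiDirQGrid} to reroute around finite sets. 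Concretely, we alternately choose a ray and an anti-ray, each avoiding the finitely many vertices used so far. Each new anti-ray only needs to avoid a finite set. Its end vertex can be chosen deep in~$G'$, and we take a tail of an anti-ray of~$G'$ disjoint from the finite set; such a tail exists because anti-rays in~$G'$ are infinitely many and disjoint. Rays are handled symmetrically. This interleaving produces the desired infinite family.

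\textbf{Case 2: no end contains both arbitrarily many disjoint rays and arbitrarily many disjoint anti-rays.}
We use that $(\cX_n)$ does not concentrate in any end. The plan is a standard ubiquity-style diagonal construction. We build finite families $\cY_k$ of $k$ rays and $k$ anti-rays, pairwise disjoint, each lying in its own end. The families are nested up to replacing members by tails, and at each step a finite separator $S_k$ cuts off the ends already used.

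To extend $\cY_k$ we look at $\cX_n$ for large~$n$. By non-concentration with bound $N=|S_k|+k$, for infinitely many~$n$ more than $N$ rays (and, for possibly different infinitely many~$n$, more than $N$ anti-rays) of~$\cX_n$ lie outside any given single end. Together with the Case~2 hypothesis, bounded multiplicity in each end lets us pick a ray and an anti-ray of~$\cX_n$ that avoid~$S_k$, avoid the finitely many ends used so far, and, after taking tails, are disjoint from the members of~$\cY_k$. Here we use the separation remark: if $\epsilon\not\leq\eta$, a finite set separates $\eta$ from $\epsilon$.

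The ends we collect may form infinite chains. If they contain a strictly increasing sequence of ends with rays (or a decreasing one with anti-rays), Theorem~\ref{thm:Zuther:grids} produces a supremum end with infinitely many disjoint rays. In that case we either fall back to a Case~1-like argument, or we obtain infinitely many rays immediately and only need to find the anti-rays disjointly, using that the anti-rays come from other ends.

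\textbf{Main obstacle.} The hardest step is Case~2. We must ensure that the tails chosen at later stages do not intersect the earlier members, and that earlier members can be shortened to tails consistently, so that the limit family is truly disjoint. The difficulty is that ends are only preordered by $\leq$, so separators exist in one direction only.

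To handle this, I would first pass to a subsequence in which the ends of the chosen rays are pairwise incomparable or form a monotone chain, using Ramsey's theorem on the pairs of ends. In the incomparable case, finite separators in both directions let us trim tails. In the monotone case, Theorem~\ref{thm:Zuther:grids} reduces to an end with infinitely many disjoint rays, whose quarter-grid lets us reroute around any finite set.
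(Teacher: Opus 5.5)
\textbf{Your Case~1 is a genuine gap, and it cannot be repaired as stated.} The greedy step ``each new anti-ray only needs to avoid a finite set'' is false. The new anti-ray must avoid the previously chosen \emph{rays}, which are infinite. Since they lie in the same end $\epsilon$ as the new anti-ray, nothing prevents every anti-ray of $G'$, and every tail of it, from meeting them infinitely often. Passing to tails does not separate members of a common end. Observation~\ref{obs:raysInBiDirQGrid} only lets you place end vertices in a prescribed infinite set; it gives no way to route around an infinite subgraph such as a ray of $G$, and $G$ and $G'$ may share infinitely many vertices.

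In fact the claim of your Case~1 is an open problem. The claim is that an end containing infinitely many disjoint rays and infinitely many disjoint anti-rays yields infinitely many rays and anti-rays that are jointly disjoint. Combined with the theorem you are proving, this would settle Problem~\ref{prob:ubiquity:antiraysRays} positively: if $(\cX_n)$ concentrates in $\epsilon$, then $\epsilon$ has arbitrarily many, hence by Theorem~\ref{thm:endDegree} infinitely many, disjoint rays and disjoint anti-rays. That is exactly the problem the paper leaves open and only reduces to one-ended digraphs. The purpose of the non-concentration hypothesis is that you never have to combine rays and anti-rays from the same end, so your case split is the wrong one.

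\textbf{The missing idea is elementary.} Two rays or anti-rays lying in distinct ends share only finitely many vertices. Infinitely many common vertices would give infinitely many disjoint trivial dipaths in both directions, making them equivalent. Hence a family in which any two members are either disjoint or in distinct ends can be made pairwise disjoint by successively passing to tails. No separators, no Ramsey argument and no Theorem~\ref{thm:Zuther:grids} are needed.

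The paper splits according to whether the rays (resp.\ anti-rays) of $\bigcup_n\cX_n$ meet infinitely many ends:
\begin{enumerate}
\item If both do, choose infinite subfamilies so that no ray is equivalent to an anti-ray, and trim.
\item If only the rays do, pigeonhole gives an end $\epsilon$ with arbitrarily many disjoint anti-rays, and Theorem~\ref{thm:endDegree} gives infinitely many. Discard the at most one chosen ray in $\epsilon$ and trim. The case where only the anti-rays do is symmetric.
\item If everything lies in finitely many ends, pigeonhole plus non-concentration gives ends $\epsilon\neq\eta$ with arbitrarily many disjoint rays in $\epsilon$ and arbitrarily many disjoint anti-rays in $\eta$. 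If the only candidate were one common end, boundedly many members of each $\cX_n$ would lie outside it, which is concentration. Theorem~\ref{thm:endDegree} plus trimming then finishes.
\end{enumerate}

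\textbf{Your Case~2 has problems of its own.} Non-concentration gives more than $N$ rays \emph{or} more than $N$ anti-rays outside a given end, not both. Also, your requirement that each chosen member lie in its own end is impossible when, say, all rays of all $\cX_n$ lie in one end and all anti-rays in another. That is precisely the situation where Theorem~\ref{thm:endDegree} must be used instead.
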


\begin{proof}
    For every $n\in\N$, let $X_n$ be a set of $n$ rays and let $Y_n$ be a set of $n$ anti-rays such that all elements of $\cX_n:=X_n\cup Y_n$ are pairwise disjoint and such that $(\cX_n)_{n\in\N}$ does not concentrate in any end of~$D$.
    Let us first assume that there exists infinitely many rays $R_1,R_2,\ldots\in\bigcup_{n\in\N} X_n$ all in distinct ends.
    If there are also infinitely many anti-rays $Q_1,Q_2,\ldots\in\bigcup_{n\in\N} Y_{n}$ all in distinct ends, then we may pass to an infinite subset of both of them such that no ray $R_i$ is equivalent to any anti-ray~$Q_j$.
    Then, we can successively take tails of each $R_i$ and $Q_i$ for all $i\in\N$ such that they are disjoint from all previously chosen tails.
    Thus, we obtain a set consisting of infinitely many rays and infinitely many anti-rays such that all of them are pairwise disjoint.

    If there are not infinitely many anti-rays in $\bigcup_{n\in\N} Y_{n}$ all from distinct ends, then there are finitely many ends that contain all anti-rays in $\bigcup_{n\in\N} Y_{n}$.
    In particular, there exists an end $\epsilon$ that contains $m$ pairwise disjoint anti-rays from $\bigcup_{n\in\N} Y_{n}$ for all $m\in\N$.
    By Theorem~\ref{thm:endDegree}, this end contains infinitely many anti-rays $Q_1, Q_2, \ldots$ that are all pairwise disjoint.
    We may assume that no $R_i$ lies in~$\epsilon$.
    Then, we can again take tails of each $R_i$ and $Q_i$ for all $i\in\N$ successively and obtain infinitely many rays and infinitely many anti-rays such that all of them are pairwise disjoint.
    This finishes the case that there exists infinitely many rays from $\bigcup_{n\in\N} X_n$ all in distinct ends.
    
    By a symmetric argument, we also obtain infinitely many rays and infinitely many anti-rays that are all pairwise disjoint in the case that that there exist infinitely many anti-rays from $\bigcup_{n\in\N} Y_n$ all in distinct ends.

    Thus, all elements of $\bigcup_{n\in\N} (X_n\cup Y_n)$ lie in only finitely many ends.
    By the assumption on $(\cX_n)_{n\in\N}$, there exists an end $\epsilon$ and a different end~$\eta$ such that $\bigcup_{n\in\N} X_n$ contains $k$ pairwise disjoint rays for every $k\in\N$ from~$\epsilon$ and $\bigcup_{n\in\N} Y_n$ contains $k$ pairwise disjoint anti-rays for every $k\in\N$ from~$\eta$.
    By Theorem~\ref{thm:endDegree}, there are infinitely many pairwise disjoint anti-rays $Q_1,Q_2,\ldots$ in~$\eta$ and infinitely many pairwise disjoint rays $R_1,R_2,\ldots$ in~$\epsilon$.
    By successively taking tails of $R_i$ and $Q_i$ for all $i\in\N$ such that they are disjoint from all previously chosen tails, we also obtain in this case a set of rays and anti-rays as desired.
    Thus, we have found such a set in each case, which shows the assertion.
\end{proof}

The property for sequences of concentrating in an end seems to imply that there is a unique end where it suffices to look at.
This is indeed the case and, moreover, this allows us to construct a one-ended digraph in which we find a sequence of sets of rays and anti-rays as in the premise of the ubiquity condition.
Let us make this precise in our next lemma.

\begin{lemma}\label{lem:reduction:oneEnd}
    Let $D$ be a digraph and let $(\cX_n)_{n\in\N}$ be a sequence such that, for each $n\in\N$, the set $\cX_n$ consists of~$n$ rays and $n$ anti-rays in~$D$ that are all pairwise disjoint.
    If there is an end $\epsilon$ such that all $R\in\cX_n$ for all $n\in\N$ lie in~$\epsilon$, then there exists a one-ended subdigraph $D'$ of~$D$ such that there is for every $n\in\N$ a set $\cY_n$ consisting of $n$ rays and $n$ anti-rays in~$D'$ that are all pairwise disjoint.
\end{lemma}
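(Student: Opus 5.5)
We will build $D'$ as a subdivided bidirected quarter-grid together with a subdivided reversed bidirected quarter-grid inside $\epsilon$, glued by connecting dipaths. The point is that such a digraph is one-ended and visibly contains arbitrarily many disjoint rays and anti-rays.

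First, since every $\cX_n$ places $n$ pairwise disjoint rays in~$\epsilon$, the end $\epsilon$ contains $n$ disjoint rays for all $n$. By Theorem~\ref{thm:endDegree} it then contains infinitely many pairwise disjoint rays, and likewise infinitely many pairwise disjoint anti-rays. Theorem~\ref{thm:grids} now gives a subdivision $G$ of the bidirected quarter-grid with all its rays in~$\epsilon$, and a subdivision $H$ of the reversed bidirected quarter-grid with all its anti-rays in~$\epsilon$.

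Next we make $G$ and $H$ disjoint. We replace $H$ by a sub-structure far out: the reversed quarter-grid contains copies of itself avoiding any finite set. We cannot remove all of $G$, which is infinite, so we argue differently. Since both lie in $\epsilon$, we pick the rays $R_i$ of $G$ and anti-rays $Q_j$ of $H$ and, using Observation~\ref{obs:raysInBiDirQGrid}, extract infinitely many rays and anti-rays. Inductively we take tails to keep $n$ rays and $n$ anti-rays disjoint.

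A cleaner option is to avoid needing $G\cap H=\emptyset$ altogether. Let $D'$ be the union of $G$, $H$, and, for each pair (a ray of $G$, an anti-ray of $H$), infinitely many disjoint dipaths in both directions between them. These exist because all of them lie in~$\epsilon$. To keep $D'$ simple, we only add countably many such paths, chosen via a diagonal enumeration, so that every ray of $G$ and every anti-ray of $H$ become pairwise equivalent within $D'$.

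The sets $\cY_n$ are then obtained as follows. Take $n$ disjoint rays of $G$ and $n$ disjoint anti-rays of $H$, and pass successively to tails avoiding the finitely many initial segments already used. This is possible because each ray of $G$ meets $H$ only in a way we can handle: if a ray meets the chosen anti-rays infinitely often, we instead select a different ray among the infinitely many disjoint ones available in $G$. Taking $n$ of them while discarding conflicts works, since each anti-ray meets only finitely many… this needs care. So we restrict instead to $n$ rays from $\cX_m$ for large $m$, as in the Theorem~\ref{thm:ubiquity:antiraysRays} argument, and add those $\cX_n$ themselves (all in $\epsilon$) to $D'$. That is simpler: let $D'$ be the union of all $\cX_n$, the grids $G$ and~$H$, and the connecting dipaths. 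Then $\cY_n:=\cX_n$ works trivially.

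The main obstacle is showing that $D'$ is one-ended, i.e.\ that every ray and anti-ray of $D'$ is equivalent to the grid rays inside $D'$ and not merely inside $D$. For each element $R$ of $\bigcup\cX_n$ we therefore add, in $D$, infinitely many disjoint dipaths from $R$ to a fixed grid ray $R_1$ of $G$ and back. These exist since both lie in $\epsilon$, and we do the same with $H$. Since there are countably many objects, $D'$ is a countable union. The delicate step is arbitrary rays of $D'$ that wander among these pieces. We handle them with a locally finite style argument: any ray in $D'$ either has a tail in one piece, and is then equivalent to it, or it meets infinitely many pieces or connecting paths. To make the latter manageable, we choose the connecting paths so that every finite vertex set meets only finitely many pieces (enumerate and choose far tails). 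A ray visiting infinitely many pieces then passes arbitrarily far out and can be linked to $G$ through the grids' property from Observation~\ref{obs:raysInBiDirQGrid} (rays ending in any infinite set), yielding equivalence in both directions. I expect this verification to be the hardest part.
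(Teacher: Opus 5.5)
\textbf{The gap: including the full members of $\cX_n$ in $D'$.} Your final construction fails where you put every member of $\bigcup_n\cX_n$ into $D'$ in full and take $\cY_n:=\cX_n$. Only members of the \emph{same} $\cX_n$ are disjoint. Rays from different $\cX_n$ may intersect, and their union can contain rays of $D$ that do not lie in~$\epsilon$. Since $D'\subseteq D$, no choice of connecting dipaths or grids can make such a ray equivalent to the rays of $\epsilon$ inside $D'$.

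For example, let $Z=w_1w_2w_3\ldots$ be a ray and let $R^{(k)}:=w_kw_{k+1}u^k_1u^k_2\ldots$ with new vertices $u^k_j$. Take a hub $\Gamma$ consisting of a ray $g_1g_2\ldots$ together with all edges $g_{j+1}g_j$. For all $k,j$, add edges in both directions between $u^k_j$ and $g_{\phi(k,j)}$, where $\phi(k,\cdot)$ is injective. Let $\cX_n$ consist of $R^{(n)}$, $n-1$ new rays and $n$ new anti-rays, the new ones attached to $\Gamma$ in the same way. All hypotheses hold, and no vertex lies on more than two of the rays. However, no edge enters $W:=\{w_1,w_2,\ldots\}$ from outside $W$. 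Hence every $R^{(1)}$--$Z$ dipath lies in $W$ and starts in $\{w_1,w_2\}$, so $R^{(1)}\not\le Z$. Your $D'$ contains $Z=\bigcup_k w_kw_{k+1}$ and therefore has at least two ends.

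The same example shows that the last part of your sketch cannot be completed. The pieces are fixed, so choosing connecting dipaths cannot ensure that a finite vertex set meets only finitely many pieces. Also, Observation~\ref{obs:raysInBiDirQGrid} only produces (anti-)rays inside the grid, not dipaths \emph{into} a ray that wanders among the pieces (here there are none). Your earlier grid-only attempts are a dead end as well. Making $G$ and $H$ disjoint would already give infinitely many pairwise disjoint rays and anti-rays, i.e.\ settle Problem~\ref{prob:ubiquity:antiraysRays} for $D$. And nothing guarantees that $G\cup H$ contains $n$ rays and $n$ anti-rays that are pairwise disjoint.

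\textbf{The missing idea: work with tails.} You should use \emph{tails} of the given rays and anti-rays themselves; no grids are needed. The paper proceeds as follows.
\begin{itemize}
\item It enumerates the rays $R_1,R_2,\ldots$ and anti-rays $Q_1,Q_2,\ldots$ of $\bigcup_n\cX_n$ and shortens them so that no vertex lies on infinitely many of them.
\item It joins consecutive $R_i,R_{i+1}$ (resp.\ $Q_i,Q_{i+1}$) by pairwise disjoint dipaths in both directions, in the pattern of the (reversed) bidirected quarter-grid, and joins $R_1$ and $Q_1$ likewise.
\item It lets $\cY_n$ consist of the tails of the members of $\cX_n$, which are still pairwise disjoint.
\end{itemize}
A ray or anti-ray of $D'$ then meets some $R_i$ or $Q_i$ infinitely often, or meets infinitely many of them.

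Be aware that the tails must be chosen with rays like $Z$ in mind. The example is already locally finite, so that normalisation alone does not remove $Z$. One way to secure this is as follows. Let each ray tail begin at the last vertex of a joining dipath from $R_1$, and let each anti-ray tail end at the first vertex of a joining dipath to $R_1$. Keep all joining dipaths pairwise disjoint, with infinitely many in each direction for every tail.

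Now let $Z'$ be a ray or anti-ray of $D'$ meeting infinitely many tails, each only finitely often.
\begin{itemize}
\item $R_1$ reaches $Z'$: use a joining dipath to the first vertex of a ray tail (or one landing far out on an anti-ray tail), then follow that tail up to $Z'$.
\item $Z'$ reaches $R_1$: follow a tail beyond its last vertex on $Z'$, then take a joining dipath.
\end{itemize}
Each vertex lies on only finitely many tails and on at most one joining dipath. Hence infinitely many of these dipaths can be chosen pairwise disjoint.
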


\begin{proof}
    Let $R_1,R_2,\ldots$ be an enumeration of the rays in $\bigcup_{n\in\N}\cX_n$ and let $Q_1,Q_2,\ldots$ be an enumeration of the anti-rays in $\bigcup_{n\in\N}\cX_n$ such that $i<j$ implies $n_i\leq n_j$ for the smallest $n_i$, $n_j$ such that $\cX_{n_i}$, $\cX_{n_j}$ contains $R_i$, $R_j$ (or $Q_i$, $Q_j$), respectively.
    By shortening each $R_i$ or~$Q_i$, if necessary, we may assume that no vertex lies on infinitely many $R_i$ or~$Q_i$.
    We join each pair $R_i$, $R_{i+1}$ by infinitely many pairwise disjoint dipaths in both directions, i.\,e.\ from $R_i$ to $R_{i+1}$ and from $R_{i+1}$ to~$R_i$, such that these dipaths are arranged as the vertical edges of the bidirected quarter-grid in Figure~\ref{fig:bidirectedQuarterGrid}.
    For the anti-rays $Q_i$ and $Q_{i+1}$ we do the same with respect to the reversed bidirected quarter-grid.
    Lastly, we join $R_1$ and~$Q_1$ by infinitely many pairwise disjoint dipaths in both directions.
    By choosing all these dipaths one after the other along an enumeration, we can achieve that each of these dipaths is disjoint from all previously chosen ones.
    Hence, all these dipaths are pairwise disjoint.
    We call the resulting subdigraph~$D'$.
    Obviously, $D'$ admits the sets $\cY_n$ as claimed in that we may take $\cY_n$ such that each element of it is a tail of some element of~$\cX_n$.
    Thus, it remains to prove that $D'$ has a unique end.
    Note that all $R_i$ and $Q_j$ are equivalent by construction.
    Now, let $R$ be ray or anti-ray in~$D'$.
    By construction, $R$ meets either one $R_i$ or one $Q_i$ infinitely often or infinitely many $R_i$ or infinitely many $Q_i$.
    In the first case, $R$ and $R_i$ or $R$ and $Q_i$ are obviously equivalent.
    In the second case, it is easy to construct infinitely many pairwise disjoint $R_1$--$R$ and infinitely many pairwise disjoint $R$--$R_1$ dipaths, where we use that each vertex lies on only finitely many of the~$R_i$.
    The third case is symmetric to the second one.
    Thus, all rays and anti-rays in~$D'$ are equivalent.   
\end{proof}

As a corollary of Lemma~\ref{lem:reduction:oneEnd} and Theorem~\ref{thm:ubiquity:antiraysRays}, we obtain Theorem~\ref{thm:main:intro:ReductionToOneEnd:RaysAntiRays}.

\begin{proof}[Proof of Theorem~\ref{thm:main:intro:ReductionToOneEnd:RaysAntiRays}.]
    Let us assume that Problem~\ref{prob:ubiquity:antiraysRays} holds for the class of one-ended digraphs and let $D$ be a digraph that admits a sequence $(\cX_n)_{n\in\N}$ such that, for every $n\in\N$, the set $\cX_n$ consists of $n$ rays and $n$ anti-rays that are pairwise disjoint.
    We may assume that $(\cX_n)_{n\in\N}$ concentrates in some end $\epsilon$ as otherwise the assertion follows from Theorem~\ref{thm:ubiquity:antiraysRays}.
    Then we find a sequence $(\cY_n)_{n\in\N}$ such that $\cY_n$, for every $n\in\N$, is a set of $n$ rays and $n$ anti-rays that are all pairwise disjoint and all lie in~$\epsilon$.
    Then there exists a one-ended subdigraph $D'$ of~$D$ by Lemma~\ref{lem:reduction:oneEnd} that also admits a sequence $(\cZ_n)_{n\in\N}$ such that $\cZ_n$, for every $n\in\N$, is a set of $n$ rays and $n$ anti-rays in~$D'$ that are all pairwise disjoint.
    By assumption, there exists a set consisting of infinitely many rays and infinitely many anti-rays in~$D'$ that are all pairwise disjoint.
    In particular, such a set exists in~$D$.
\end{proof}

\section{On the ubiquity for double rays}\label{sec:ubiquity:DoubleRays}

In this section, we will prove our main result, Theorem~\ref{thm:main:intro:ReductionToOneEnd:doubleRays}.
For this, we consider the situation that a digraph contains $n$ pairwise disjoint double rays for all $n\in\N$.
For this, we call a sequence $(\mathcal R_n)_{n\in\N_{\geq N}}$ with $N\in\N$ such that $\mathcal R_n$ is a set of $n$ pairwise disjoint double rays in~$D$ a \emph{valid} sequence.
It is often useful to pass onto new sequences that arise from $(\mathcal R_n)_{n\in\N}$ in a certain way.
For this, a sequence $(\mathcal P_i)_{i\in\N}$ is a valid sequence \emph{induce by} $(\mathcal R_n)_{n\in\N}$ if there exists a strictly increasing sequence $(n_i)_{i\in\N}$ such that $\mathcal P_i$ is a subset of size~$i$ of~$\mathcal R_{n_i}$.

A valid sequence $(\cR_n)_{n\in\N}$ \emph{concentrates} in finitely many ends if there exist $N,M\in\N$ and $M$ ends $\epsilon_1,\ldots,\epsilon_M$ of~$D$ such that for all $n\in\N$ for every double ray $R\in\cR_n$ except for at most~$N$ there exists $1\leq i\leq M$ such that all tails of~$R$ lie in~$\epsilon_i$.
Furthermore, we say that the rays (the anti-rays) in $(\cR_n)_{n\in\N}$ \emph{concentrate} in a set $\mathcal E$ of ends if there exists $N\in\N$ such that every forward tail (every backward tail) from each $\cR_n$ with $n\geq N$ lies in an element of~$\mathcal E$.
If $\epsilon$ is the only element of~$\cE$, we simply write that the rays (the anti-rays) concentrate in~$\epsilon$.

It is well-known that every infinite poset contains either a countably infinite anti-chain, or an infinite strictly increasing chain isomorphic to $\N$, or an infinite strictly decreasing chain isomorphic to $\mathbb Z\sm\N$.
We need a similar but stronger result, see Proposition~\ref{prop:ConcentrationSomewhere}.
Before we state that proposition, we need some more notations for its proof.

Let $\epsilon$ be an end of a digraph~$D$.
Then we define the following sets
\begin{alignat*}{2}
    &\lfloor\epsilon\rfloor^\ast&&:=\{\eta>\epsilon\mid \eta\text{ end of }D\},\\
    &\lceil\epsilon\rceil^\ast&&:=\{\eta<\epsilon\mid \eta\text{ end of }D\},\\
    &\epsilon^\bot&&:=\{\eta\mid \eta\text{ end of }D:\ \eta\not\leq\epsilon\text{ and }\eta\not\geq\epsilon\}.
\end{alignat*}

\begin{proposition}\label{prop:ConcentrationSomewhere}
    Let $D$ be a digraph and let $(\cR_n)_{n\in\N}$ be a valid sequence.
    Then there is a valid sequence $(\cQ_n)_{n\in\N}$ induced by $(\cR_n)_{n\in\N}$ such that for the set $\cE_n$ of ends containing the forward tails (the backward tails) of the double rays in~$\cQ_n$ one of the following holds:
    \begin{enumerate}[label = {\rm (\roman*)}]
        \item\label{itm:ConcentrationSomewhere:1} there is a unique end $\epsilon$ with $\{\epsilon\}=\cE_n$ for all $n\in\N$;
        \item\label{itm:ConcentrationSomewhere:2} $\epsilon_n<\epsilon_m$ for all $n<m\in\N$ and $\epsilon_n\in\cE_n$ and $\epsilon_m\in\cE_m$;
        \item\label{itm:ConcentrationSomewhere:3} $\epsilon_n>\epsilon_m$ for all $n<m\in\N$ and $\epsilon_n\in\cE_n$ and $\epsilon_m\in\cE_m$;
        \item\label{itm:ConcentrationSomewhere:4} $\epsilon_n$ and $\epsilon_m$ are incomparable for all $n\neq m\in\N$ and $\epsilon_n\in\cE_n$ and $\epsilon_m\in\cE_m$.
    \end{enumerate}
\end{proposition}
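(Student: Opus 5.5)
We read the proposition as applying separately to forward tails and to backward tails, and we prove it for forward tails; the backward case is symmetric. For each $n$ let $\cE_n^{\mathcal R}$ be the set of ends containing the forward tails of double rays in $\cR_n$; it has at most $n$ elements. Passing to subsets of size $i$ of $\cR_{n_i}$ with $n_i$ growing fast keeps the sequence valid. Our plan is a Ramsey-type argument on the finite end sets, together with a pigeonhole step that pushes many double rays into one end or into a sparse family of ends.

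First we split according to multiplicities. Suppose there is an end $\epsilon$ such that, for every $k$, some $\cR_n$ has at least $k$ double rays whose forward tails lie in $\epsilon$. Then we choose $n_i$ with at least $i$ such double rays in $\cR_{n_i}$ and let $\cQ_i$ consist of $i$ of them. This gives \ref{itm:ConcentrationSomewhere:1}.

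Otherwise no single end carries unboundedly many forward tails. We then aim to extract an infinite sequence of ends $\epsilon_1,\epsilon_2,\ldots$ from a diagonal family. The natural first attempt is Ramsey's theorem for pairs applied to the countable set $\bigcup_n\cE_n^{\mathcal R}$, colouring pairs by ``$<$'', ``$>$'' or ``incomparable''. This gives an infinite homogeneous set, i.e.\ an infinite chain or antichain. However, we need every end of $\cE_i$ related in the prescribed way to every end of $\cE_j$, and each $\cE_i$ must contain $i$ ends. So a single infinite homogeneous sequence is not enough.

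We handle this with a block version of Ramsey's theorem. At stage $i$ we need a set $F_i$ of ends, each carrying a forward tail from one common $\cR_{n_i}$, such that $|F_i|\ge i$ and the double rays realising them can be chosen disjoint (they already are, being in one $\cR_{n_i}$). Across blocks, all pairs must get the same colour, and within a block the relation must be compatible with the chosen case. For example, in case \ref{itm:ConcentrationSomewhere:2} the statement only compares ends from different $\cE_n$, so within a block anything is allowed.

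To build the blocks we proceed inductively, keeping an infinite ``reservoir'' of candidate indices $n$. Given the blocks fixed so far, each candidate end $\eta$ in a later $\cE_n^{\mathcal R}$ relates to the finitely many ends already chosen in one of finitely many patterns. By pigeonhole, for infinitely many $n$, a subset of $\cE_n^{\mathcal R}$ of size tending to infinity has the same pattern. Because multiplicities are bounded in this case, $|\cE_n^{\mathcal R}|\to\infty$ along a subsequence, which is what makes the pigeonhole step work. We then restrict to those indices and pick the next block from them. The pattern, however, must be uniform across blocks. So we first fix a colour $c$ by an outer Ramsey argument, using Ramsey for pairs on a diagonal selection of one end from each $\cE_n^{\mathcal R}$, and afterwards refine each block to subsets realising colour $c$ with all earlier blocks.

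We expect the main obstacle to be ensuring that block sizes do not collapse. For instance, it could happen that most ends of $\cE_n^{\mathcal R}$ are below earlier ends while only boundedly many are above them. To handle this we would use a product Ramsey / canonical argument on the sets $\cE_n^{\mathcal R}$, viewed as growing finite sets. Since there are only three colours, each refinement step loses at most a factor $3^{(\text{number of earlier ends})}$. This is absorbed by choosing $n_{i}$ large, so that $|\cE_{n_i}^{\mathcal R}|\ge i\cdot 3^{\sum_{j<i}|F_j|}$. Whichever colour appears infinitely often across stages (after a final pigeonhole) determines whether we are in case \ref{itm:ConcentrationSomewhere:2}, \ref{itm:ConcentrationSomewhere:3} or \ref{itm:ConcentrationSomewhere:4}. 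Consistency of the colour between non-consecutive blocks is the technical point that the inductive refinement is designed to guarantee.
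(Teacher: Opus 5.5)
There is a genuine gap. Your skeleton (an infinite reservoir refined with respect to the ends already chosen, then a pigeonhole on colours) is the paper's, but the step you use to form blocks is false.

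You require each block $F_i$ to consist of at least $i$ \emph{distinct} ends, justified by ``multiplicities are bounded, so $|\cE_n^{\mathcal R}|\to\infty$ along a subsequence''. The failure of (i) only bounds the multiplicity of each \emph{fixed} end; the bound is not uniform over ends.

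For a counterexample, let $D$ be the disjoint union of digraphs $G_n$. Here $G_n$ consists of $n$ disjoint consistently oriented double rays, with edges in both directions between corresponding vertices of consecutive ones, and $\cR_n$ is the set of these $n$ double rays. All forward tails of $\cR_n$ lie in one end $\epsilon_n$, and each $\epsilon_m$ receives forward tails only from $\cR_m$. So no end carries unboundedly many forward tails, yet $|\cE_n^{\mathcal R}|=1$ for every $n$. No block with two distinct ends exists, although $\cQ_n=\cR_n$ satisfies (iv).

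The proposition asks for $n$ double rays in $\cQ_n$, not $n$ ends in $\cE_n$, and it never compares two ends of the same $\cE_n$. So blocks should be sets of double rays that may share ends. The repair:
\begin{itemize}
\item Pick about $3m$ double rays from one member of the current reservoir.
\item For each of their finitely many ends, pass to an induced subsequence of the reservoir in which every forward tail lies strictly above, strictly below, or incomparable to that end. This discards the boundedly many double rays whose forward tail lies in that end, which is where the per-end bound is used.
\item Keep $m$ of the chosen double rays whose ends received the same colour.
\end{itemize}
The paper gets the same with a case split. Either some $\cR_n$ in the reservoir has $m$ double rays with forward tails in one common end, and these form the block. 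Or some $\cR_n$ supplies $3(m-1)+1$ distinct ends for the pigeonhole.

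Second, drop the step that fixes the colour $c$ in advance by Ramsey's theorem on a diagonal choice of one end from each $\cE_n^{\mathcal R}$. A homogeneous chain or antichain of single ends says nothing about how large subsets of later $\cE_n^{\mathcal R}$ relate to earlier blocks. Insisting on a prescribed $c$ is exactly what can make blocks collapse, as you suspected.

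The reservoir refinement you describe already gives every chosen end one fixed relation to \emph{all} later blocks. Add a pigeonhole inside each new block, which your write-up leaves implicit, so that each block has a single colour towards everything after it. Then a colour occurring for infinitely many blocks yields (ii), (iii) or (iv) directly. This is precisely the paper's argument.

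It also makes the bound $|\cE_{n_i}^{\mathcal R}|\ge i\cdot 3^{\sum_{j<i}|F_j|}$ unnecessary. That is just as well, since that bound again presupposes that the number of distinct ends grows.
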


\begin{proof}
    If suffices to prove the proposition for the forward tails: the other case follows from this by applying the result for the forward tails to the digraph obtained from~$D$ by reversing all edge directions.

    Let us assume that no valid sequence induced by $(\cR_n)_{n\in\N}$ satisfies~\ref{itm:ConcentrationSomewhere:1}.
    Then, in particular, for no valid sequence induced by $(\cR_n)_{n\in\N}$, the rays concentrate in any end.
    Let $R_1^1\in\cR_1$, set $\cQ'_1:=\{R_1^1\}$, and let $\eta_1^1$ be the end that contains the forward tail of~$R_1^1$.
    Then there is a valid sequence $(\cR_n^1)_{n\in\N_{\geq 2}}$ induced by $(\cR_n)_{n\in\N_{\geq 2}}$ all of whose forward tails either lie in $\lfloor\eta_1^1\rfloor^\ast$, or lie in $\lceil\eta_1^1\rceil^\ast$, or lie in $(\eta_1^1)^\bot$.
    
    If there exists an $n\in\N$ such that $\cR^1_n$ has two double rays $R_1^2$ and $R_2^2$ whose forward tails lie in the same end, then we denote this end by~$\eta_1^2$.
    As before, we find a valid sequence $(\cR^2_n)_{n\in\N_{\geq 3}}$ induced by $(\cR^1_n)_{n\in\N_{\geq 3}}$ all of whose forward tails either lie in $\lfloor\eta_1^2\rfloor^\ast$, or lie in $\lceil\eta_1^2\rceil^\ast$, or lie in $(\eta_1^2)^\bot$.
    We set $\cQ'_2:=\{R_1^2,R_2^2\}$.
    If there are no two double rays in any $\cR^1_n$ whose forward tails lie in a common end, then we consider $\cR^1_4$.
    Note that the forward tails of the double rays from $\cR^1_4$ lie in at least four ends $\eta^2_1,\ldots,\eta^2_4$.
    We find a valid sequence $(\cR^{2,1}_n)_{n\in\N}$ induced by $(\cR^1_n)_{n\in\N}$ all of whose forward tails either lie in $\lfloor\eta^2_1\rfloor^*$, or lie in $\lceil\eta^2_1\rceil^*$, or lie in $(\eta^2_1)^\bot$, then we also find such a valid sequence $(\cR^{2,2}_n)_{n\in\N}$ induced by $(\cR^{2,1}_n)_{n\in\N}$ with respect to~$\eta^2_2$, then for~$\eta^2_3$ and finally for~$\eta^2_4$.
    Then there are $i\neq j\in\{1,\ldots,4\}$ such that all forward tails of $(\cR^{2,i}_n)_{n\in\N}$ and $(\cR^{2,j}_n)_{n\in\N}$ either lie in $\lfloor\eta^2_i\rfloor^*$ and $\lfloor\eta^2_j\rfloor^*$, or lie in $\lceil\eta^2_i\rceil^*$ and $\lceil\eta^2_j\rceil^*$, or lie in $(\eta^2_i)^\bot$ and $(\eta^2_j)^\bot$, respectively.
    Let $R^2_1\in\cR^1_4$ whose forward tails lie in~$\eta_i^2$ and let $R_2^2\in\cR^1_4$ whose forward tails lie in~$\eta_j^2$.
    Set $\cQ'_2:=\{R_1^2,R_2^2\}$ and $\cR^2_n:=\cR^{2,4}_n$ for all $n\in\N_{\geq 3}$.
    
    We continue the construction for $3$ instead of~$2$, where the case distinction is whether we find at most $6$ ends that contain all forward tails of all elements of all $\cR^2_n$ or whether there are at least~$7$ such ends.
    So among the valid sequences induced by the $(\cR^{3,i}_n)_{n\in\N}$ in the latter case at least three have the same behaviour with respect to where the ends lie that contain their forward tails.
    We continue this whole construction for all $m\in\N$.
    Then there must be one option that we picked infinitely often: either $\lfloor\eta_j^i\rfloor^*$, or $\lceil\eta_j^i\rceil^*$, or~$(\eta_j^i)^\bot$.
    Building a valid sequence $(\cQ_n)_{n\in\N}$ induced by $(\cQ'_n)_{n\in\N}$ by using the ends where we always did the same choice, this sequence satisfies \ref{itm:ConcentrationSomewhere:2}, if the choice was $\lfloor\eta_j^i\rfloor^*$ infinitely often, or~\ref{itm:ConcentrationSomewhere:3}, if the choice was $\lceil\eta_j^i\rceil^*$ infinitely often, or~\ref{itm:ConcentrationSomewhere:4}, if the choice was $(\eta_j^i)^\bot$ infinitely often.
\end{proof}

This proposition enables us to verify Problem~\ref{prob:ubiquity:doubleRays} for a large class of digraphs.

\begin{thm}\label{thm:ubiquity:doubleRays}
    Let $D$ be a digraph.
    If there is a valid sequence $(\cR_n)_{n\in\N}$ that does not concentrate in finitely many ends, then $D$ contains infinitely many pairwise disjoint double rays.
\end{thm}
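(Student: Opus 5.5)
Apply Proposition~\ref{prop:ConcentrationSomewhere} twice: first to the forward tails, then, to the resulting induced sequence, to the backward tails. This gives a valid sequence $(\cQ_n)_{n\in\N}$ induced by $(\cR_n)_{n\in\N}$ in which the forward tails satisfy one of \ref{itm:ConcentrationSomewhere:1}--\ref{itm:ConcentrationSomewhere:4}, and so do the backward tails. Passing to induced subsequences preserves this. Since $(\cR_n)$ does not concentrate in finitely many ends, I would first arrange that $(\cQ_n)$ does not do so either. A priori this could fail, since choosing subsets might discard the non-concentrating rays. To prevent this, at each step of the proposition's construction I would keep, inside $\cR_{n_i}$, many double rays whose tails avoid any prescribed finite set of ends. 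The alternative is to observe that the case distinction below only uses the conclusion of the proposition. I expect that whenever both forward and backward tails fall into case~\ref{itm:ConcentrationSomewhere:1}, the two ends $\epsilon$ (forward) and $\eta$ (backward) satisfy either $\epsilon=\eta$, which is concentration in one end and is handled separately below, or $\epsilon\neq\eta$.

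\textbf{Case A: some side is not in case \ref{itm:ConcentrationSomewhere:1}.} Say the forward tails satisfy \ref{itm:ConcentrationSomewhere:2}, \ref{itm:ConcentrationSomewhere:3} or \ref{itm:ConcentrationSomewhere:4}, and choose one double ray $R_n\in\cQ_n$ for each $n$.
\begin{itemize}
\item In case \ref{itm:ConcentrationSomewhere:4}, the forward ends $\epsilon_n$ are pairwise incomparable, so any two are separated by finite vertex sets in both directions. I then choose double rays greedily, at each stage picking $R_n$ from a later $\cQ_m$ so that it avoids the finitely many chosen ones; since $\cQ_m$ has $m$ disjoint members, one of them avoids any finite set. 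The difficulty is that double rays are not finite, so disjointness from previously chosen double rays is not automatic.
\item Hence the real approach is to reroute. Use that in cases \ref{itm:ConcentrationSomewhere:2}/\ref{itm:ConcentrationSomewhere:3} Theorem~\ref{thm:Zuther:grids} yields a supremum (resp.\ infimum) end $\epsilon$ containing infinitely many disjoint rays. Then Theorem~\ref{thm:grids} gives a subdivided bidirected quarter-grid in $\epsilon$.
\item Strictly increasing ends $\epsilon_n<\epsilon$ give, from each forward tail, infinitely many paths into $\epsilon$. Via Observation~\ref{obs:raysInBiDirQGrid} I would link a backward part of each chosen $R_n$ to its own ray in the grid, producing infinitely many disjoint double rays, each consisting of a backward tail of $R_n$ followed by a path and a grid ray.
\item I would do the same on the backward side, and likewise for mixed cases.
\end{itemize}

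\textbf{Case B: both sides in case \ref{itm:ConcentrationSomewhere:1} with $\epsilon\neq\eta$.} Here every $\cQ_n$ gives $n$ disjoint rays in $\epsilon$ and $n$ disjoint anti-rays in $\eta$. By Theorem~\ref{thm:endDegree}, $\epsilon$ contains infinitely many disjoint rays and $\eta$ infinitely many disjoint anti-rays, so Theorem~\ref{thm:grids} provides a quarter-grid $G_\epsilon$ in $\epsilon$ and a reversed quarter-grid $G_\eta$ in $\eta$. Since $\epsilon\neq\eta$, some direction fails, say $\epsilon\not\leq\eta$, so a finite set $S$ separates them. Each double ray in $\cQ_n$ contains an $\eta$--$\epsilon$ segment, and there are $n$ disjoint such segments for every $n$, so I would argue $\eta\le\epsilon$ with unboundedly many disjoint connections. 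Then I would build inductively infinitely many disjoint paths from $G_\eta$ to $G_\epsilon$, avoiding earlier finite pieces: any finite set meets at most boundedly many members of a large $\cQ_n$. Each path is extended by Observation~\ref{obs:raysInBiDirQGrid} backwards in $G_\eta$ and forwards in $G_\epsilon$ with fresh disjoint (anti-)rays, using infinite sets $U$ of path endpoints.

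The main obstacle is ensuring disjointness of the infinite extensions. Observation~\ref{obs:raysInBiDirQGrid} only promises disjoint rays starting at some infinite subset of the endpoints, and these must be compatible on both sides simultaneously. My first attempt would be to pass to an infinite subset of the connecting paths on the $\epsilon$ side, and then a further subset on the $\eta$ side; since the observation applies to any infinite $U$, this should work. The remaining concern is the non-concentration hypothesis, whose only role is to exclude the case that the forward and backward ends are equal and single, which is the one-ended situation left open.
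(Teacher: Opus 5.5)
Your Case B is correct and is essentially the paper's Case~1. The genuine gap is Case A, which is where almost all the work lies.

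\textbf{Disjointness of the backward parts.} The double rays $R_n$ you take from different $\cQ_n$ need not be disjoint; only members of the same $\cQ_n$ are. Choosing $R_n$ in a later $\cQ_k$ only lets you avoid \emph{finite} sets. So the double rays ``backward part of $R_n$, then a dipath, then a grid ray'' need not be disjoint: the backward tails may meet, and may even coincide from some point on when they lie in one end. Passing to backward tails does not repair this. Your only link from the backward tail of $R_n$ to the grid runs along $R_n$ itself, through exactly the segment you cut off. ``Do the same on the backward side'' is precisely the missing argument.

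The principle the paper follows is to take only finite pieces from the given double rays and to let all infinite pieces come from objects already known to be disjoint.
\begin{itemize}
\item If the backward tails concentrate in an end $\epsilon$ (the paper's Case~2), they are discarded and replaced by anti-rays of a reversed quarter-grid $D'$ in $\epsilon$. Disjoint rays starting on $D'$ are then built using the order type of the forward ends. In the increasing case, finite $D'$--$D''$ dipaths are instead routed through the $R_n$.
\item If neither side concentrates (Case~3), further ideas are needed. In the increasing case, either a Menger rerouting inside a single $\cR_i$ reduces to Case~2, or one uses anti-rays from infinitely many distinct intermediate ends. The decreasing case needs separator arguments. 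When both sides are incomparable, a Ramsey argument (pairwise disjoint versus pairwise intersecting) is used.
\end{itemize}

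\textbf{Further problems in Case A.} Case \ref{itm:ConcentrationSomewhere:4} is simply left open. You observe that greedy selection does not give disjointness, and your rerouting relies on a supremum, which does not exist without a chain.

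In case \ref{itm:ConcentrationSomewhere:3}, the limit end is an infimum (from Theorem~\ref{thm:Zuther:grids} applied to the reversed digraph) and lies below the $\epsilon_n$. Dipaths then run from its grid to the forward tails rather than into it, so your linking step goes the wrong way.

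\textbf{The same-end case.} The case where both sides are in \ref{itm:ConcentrationSomewhere:1} with the same end is never actually ``handled separately''. Excluding it requires first passing to a sequence none of whose induced sequences concentrates in finitely many ends. The paper does this at the start by keeping, in each $\cR_{n_i}$, at most one double ray per previously seen end; you only gesture at this.
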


\begin{proof}
    Let $(\cR_n)_{n\in\N}$ be a valid sequence in~$D$.
    We distinguish several cases depending on whether there are ends in which the rays or the anti-rays in valid sequences induced by $(\cR_n)_{n\in\N}$ concentrate.
    For this, let us first prove that our assumptions imply that one of the following three cases holds.
    \begin{enumerate}[label = {\rm (\roman*)}]
        \item\label{itm:pf:ubiquity:doubleRays:1} There are distinct ends $\epsilon$ and~$\eta$ and a valid sequence induced by $(\cR_n)_{n\in\N}$ for which the anti-rays concentrate in~$\epsilon$ and the rays concentrate in~$\eta$.
        \item\label{itm:pf:ubiquity:doubleRays:2} There exists an end $\epsilon$ and a valid sequence induced by $(\cR_n)_{n\in\N}$ either whose rays or whose anti-rays concentrate in~$\epsilon$ but such that \ref{itm:pf:ubiquity:doubleRays:1} does not hold.
        \item\label{itm:pf:ubiquity:doubleRays:3} For every end $\epsilon$ and every valid sequence induced by $(\cR_n)_{n\in\N}$, neither the rays nor the anti-rays concentrate in~$\epsilon$.
    \end{enumerate}
    To show that one of \ref{itm:pf:ubiquity:doubleRays:1}--\ref{itm:pf:ubiquity:doubleRays:3} happens, it suffices to show that there is a valid sequence $(\cQ_n)_{n\in\N}$ induced by $(\cR_n)_{n\in\N}$ such that no valid sequence induced by $(\cQ_n)_{n\in\N}$ concentrates in any finite set of ends.
    For all $n\in\N$, let $\cE_n$ be the set of all ends $\epsilon$ such that some double ray $R\in\cR_n$ has all its tails in~$\epsilon$.
    Since $(\cR_n)_{n\in\N}$ does not concentrate in only finitely many ends, there is a subsequence $(\cR_{n_i})_{i\in\N}$ with strictly increasing $n_i$ such that $\cR_{n_i}$ contains at least $i$ double rays each of which either has its tails being equivalent but not in any end from $\bigcup_{j<i}\cE_{n_j}$ or has non-equivalent tails.
    We set $\cQ'_1:=\cR_{n_1}$.
    For $i>1$, let $\cQ'_i$ be the subset of $\cR_{n_i}$ that consists of all double rays with non-equivalent tails, of all double rays whose tails are all equivalent but do not lie in any element of $\bigcup_{j<i}\cE_{n_j}$ and, for all elements $\epsilon$ in $\bigcup_{j<i}\cE_{n_j}$ for which there exists a double ray in $\cR_{n_i}$ with all its tails in~$\epsilon$, exactly one such double ray.
    By construction, $(\cQ'_i)_{i\in\N}$ induces a valid sequence $(\cQ_i)_{i\in\N}$ with $\cQ_i\subseteq \cQ'_i$.
    This sequence clearly satisfies one of \ref{itm:pf:ubiquity:doubleRays:1}--\ref{itm:pf:ubiquity:doubleRays:3}.

    \medskip

    {\bf Case 1.}
    First, let us consider the case that there exist two distinct ends $\epsilon$ and~$\eta$ and a valid sequence $(\cQ_n)_{n\in\N}$ induced by $(\cR_n)_{n\in\N}$ for which the anti-rays concentrate in~$\epsilon$ and the rays concentrate in~$\eta$.
    Then there is no finite vertex set separating $Q$ from~$R$, where $Q$ is an anti-ray in~$\epsilon$ and $R$ is a ray in~$\eta$.
    Thus, there are infinitely many pairwise disjoint $Q$--$R$ dipaths and we have $\epsilon<\eta$.
    Since the anti-rays and rays in $(\cQ_n)_{n\in\N}$ concentrate in~$\epsilon$ and~$\eta$, respectively, there are infinitely many pairwise disjoint anti-rays in~$\epsilon$ and infinitely many pairwise disjoint rays in~$\eta$ by Theorem~\ref{thm:endDegree}.
    Hence, Theorem~\ref{thm:grids} implies the existence of subgraphs $D_1$ and~$D_2$ of~$D$ such that $D_1$ is a subdivision of the reversed bidirected quarter-grid with all anti-rays in~$\epsilon$ and $D_2$ is a subdivision of the bidirected quarter-grid with all rays in~$\eta$.
    Since $\epsilon\neq\eta$, there is a finite vertex set separating either $\eta$ from~$\epsilon$ or $\epsilon$ from~$\eta$ and hence, $D_1$ and $D_2$ have at most finitely many vertices in common.
    So we may assume that they are disjoint.
    Because of $\epsilon<\eta$, there are infinitely many pairwise disjoint $D_1$--$D_2$ dipaths $P_1,P_2,\dots$.
    By Observation~\ref{obs:raysInBiDirQGrid}, there are infinitely many pairwise disjoint anti-rays $R_1,R_2,\ldots$ in~$D_1$ that end at distinct first vertices of those dipaths, say the last vertex of $R_i$ is the first vertex of~$P_{n_i}$.
    Also by Observation~\ref{obs:raysInBiDirQGrid}, we find infinitely many pairwise disjoint rays $Q_1,Q_2,\ldots$ in~$D_2$ such that $Q_i$ starts at the end vertex of the dipath~$P_{n_{k_i}}$.
    Then each $R_{k_i}P_{n_{k_i}}Q_i$ is an $\epsilon$--$\eta$ double ray and every two such double rays are pairwise disjoint.
    Thus, there are infinitely many pairwise disjoint $\epsilon$--$\eta$ double rays.

    \medskip

    {\bf Case 2.}
    Now, we consider the case that there exist an end $\epsilon$ and a valid sequence $(\mathcal Q_n)_{n\in\N}$ induced by $(\mathcal R_n)_{n\in\N}$ either whose rays concentrate in~$\epsilon$ but whose anti-rays do not concentrate in any end or whose anti-rays concentrate in~$\epsilon$ but whose rays do not concentrate in any end.
    We assume that the latter case holds.
    The other case will follow by a completely symmetric argument.
    We may assume $\cQ_n=\cR_n$.
    By Theorem~\ref{thm:grids}, there exists a subdigraph $D'$ of~$D$ such that $D'$ is a subdivision of the reversed bidirected quarter-grid with all anti-rays in~$\epsilon$.
    Furthermore, by Proposition~\ref{prop:ConcentrationSomewhere}, there is a valid sequence $(\cQ_n)_{n\in\N}$ induced by $(\cR_n)_{n\in\N}$ such that for the set $\cE_n$ of ends containing the forward tails of the double rays in~$\cQ_n$ one of \ref{itm:ConcentrationSomewhere:2}, \ref{itm:ConcentrationSomewhere:3}, or \ref{itm:ConcentrationSomewhere:4} from that proposition holds.
    We may assume that $\cQ_n=\cR_n$ for all $n\in\N$.
    We split the remaining proof of this case into the three cases from Proposition~\ref{prop:ConcentrationSomewhere}.
     
    \medskip
    
    So, let us first assume that \ref{itm:ConcentrationSomewhere:4} from Proposition~\ref{prop:ConcentrationSomewhere} holds, that is, that all $\mu\in\cE_i$ and $\eta\in\cE_j$ for $i\neq j$ are incomparable.
    By restricting to a valid sequence induced by $(\cR_n)_{n\in\N}$, if necessary, we may assume that either $\mu>\epsilon$ for all $\mu\in\bigcup_{i\in\N}\cE_i$ or there exists for each $\mu\in\bigcup_{i\in\N}\cE_i$ a finite vertex set $S_\mu$ separating $\mu$ from~$\epsilon$.
    First, we consider the case that $\mu>\epsilon$ for all $\mu\in\bigcup_{i\in\N}\cE_i$.
    Then there exists a ray~$R_1$ starting at~$D'$, that is otherwise disjoint from~$D'$, and that lies in~$\mu_1\in\cE_1$.
    Because of $\mu_2>\epsilon$ for every $\mu_2\in\cE_2$, there is another ray~$R_2$ that starts at~$D'$, that is otherwise disjoint from~$D'$, that lies in some $\mu_2\in\cE_2$, and that does not contain any vertex from~$R_1$: we can pick an arbitrary ray~$R_2'$ in some~$\mu_2$ that is disjoint from~$D'$ and from~$R_1$ and since $\mu_2>\epsilon$ and $\mu_1\not\leq\mu_2$, there must be a finite $D'$--$R_2'$ dipath $P_2$ that avoids~$R_1$ and hence $P_2\cup R_2'$ contains a ray as claimed.
    Recursively, we obtain for every $i\in\N$ a ray $R_i$ that starts at a vertex of~$D'$, that is otherwise disjoint from~$D'$, that lies in some $\mu_i\in\cE_i$, and that is disjoint from all $R_j$ for $j<i$.
    By Observation~\ref{obs:raysInBiDirQGrid}, there exist infinitely many anti-rays $Q_1,Q_2,\ldots$ in~$D'$ whose last vertices are among the starting vertices of an infinite subset of the~$R_i$, say that the last vertex of~$Q_i$ is the first vertex of $R_{n_i}$.
    Then the union of all~$Q_i$ with all $R_{n_i}$ forms an infinite set of pairwise disjoint double rays.
    
    So let us now assume that $\epsilon\not\leq \mu$ for all $\mu\in\bigcup_{i\in\N}\cE_i$, that is, for every $\mu\in\bigcup_{i\in\N}\cE_i$, there is a finite vertex set $S_\mu$ that separates $\mu$ from~$\epsilon$.
    Let $\{R_1^1\}:=\cR_1$ and let $\mu_1$ be the unique element of~$\cE_1$.
    Since there are infinitely many pairwise disjoint $D'$--$R^1_1$ dipaths, let $P^1$ be one of them that avoids $S_1:=S_{\mu_1}$ and such that no vertex of~$S_1$ lies on~$R_1^1$ before the end vertex of~$P^1$.
    Then there exists a ray $R_1$ in $P^1\cup R^1_1$ that starts at the first vertex of~$P^1$ and, since $S_1$ separates $\mu_1$ from~$\epsilon$, there is a subdipath $P_1$ of~$R_1$ with the same first vertex as~$R_1$ whose last vertex is the last vertex on $R_1^1$ in~$S_1$.
    Set $i_2:= |P_1|+|S_1|+1$ and $\{R_2^1,\ldots,R_2^{i_2}\}:=\cR_{i_2}$.
    Then there is some $R_2^j$ that avoids $V(P_1)\cup S_1$.
    As $S_1$ separates $\mu_1$ from~$\epsilon$, the double ray $R_2^j$ is disjoint from~$R_1-P_1$ and hence from the whole ray~$R_1$.
    As in the previous case, we find a $D'$--$R_2^j$ dipath $P^2$ that meets $R_2^j$ before any vertices from~$S_2:=S_{\mu_2}$, where $\mu_2$ is the end that contains the forward tail of~$R_2^j$.
    Let $R_2$ be a ray in $P^2\cup R_2^j$ with the same first vertex as~$P^2$ and let $P_2$ be the subdipath of~$R_2$ that has the same first vertex as~$R_2$ and ends at the last vertex on~$R_2^j$ in~$S_2$.
    We set $i_3:=|P_1|+|S_1|+|P_2|+|S_2|+1$ and continue this construction recursively.
    Thus, we obtain infinitely many disjoint rays $R_i$ that start at (distinct) vertices of~$D'$, that are otherwise disjoint from~$D'$, and that lie in elements of~$\cE_i$.
    By Observation~\ref{obs:raysInBiDirQGrid}, there are infinitely many pairwise disjoint anti-rays in~$D'$ that we can join to a suitable infinite subset of $\{R_i\mid i\in\N\}$ to obtain infinitely many pairwise disjoint double rays.
    This completes the subcase that \ref{itm:ConcentrationSomewhere:4} from Proposition~\ref{prop:ConcentrationSomewhere} holds.

    \medskip

    Now, let us assume that \ref{itm:ConcentrationSomewhere:2} from Proposition~\ref{prop:ConcentrationSomewhere} holds, that is, we have $\mu<\eta$ for all $\mu\in\cE_i$ and all $\eta\in\cE_j$ if $i<j$.
    In particular, we find a countably infinite ascending sequence $\mu_1<\mu_2<\ldots$ with $\mu_i\in\cE_i$.
    By Theorem~\ref{thm:Zuther:grids}, there exists an end~$\mu$ with infinitely many pairwise disjoint rays that is the supremum of the~$\mu_i$.
    Note that $\eta<\mu_{i+1}<\mu$ holds for every $\eta\in\cE_i$.
    Let us first assume that $\mu\neq\epsilon$.
    Let $D''$ be a subdivision of the bidirected quarter-grid with all its rays in~$\mu$.
    By an argument used above, we may assume that $\mu$ and $\epsilon$ are disjoint since they are distinct.
    Let $R_1\in\cR_1$.
    Then there are $D'$--$R_1$ and $R_1$--$D''$ dipaths $P'_1$ and $P''_1$, respectively, such that $P'_1\cup R_1\cup P''_1$ contains a $D'$--$D''$ dipath~$P_1$.
    Note that the $R_1$--$D''$ dipath exists as $\mu_1<\mu$.
    Set $i_2:=|P_1|+1$.
    Then there exists $R_2\in\cR_{i_2}$ that avoids~$P_1$.
    Since the backward tail of~$R_2$ lies in~$\epsilon$, there exists a $D'$--$R_2$ dipath that avoids~$P_1$, too.
    We already saw $\eta_2<\mu$ for the end $\eta_2$ that contains the forward tail of~$R_2$.
    Thus, there exists an $R_2$--$D''$ dipath $P''_2$ that avoids~$P_1$ and such that $P'_2\cup R_2\cup P''_2$ contains a $D'$--$D''$ dipath~$P_2$.
    By construction, $P_1$ and~$P_2$ are disjoint.
    We continue with $i_3:=|P_1|+|P_2|+1$ and recursively obtain infinitely many pairwise disjoint $D'$--$D''$ dipaths~$P_i$.
    By Observation~\ref{obs:raysInBiDirQGrid}, there is an infinite subset of those paths that can be extended to rays with tails in~$D''$.
    By the same lemma, we can extend infinitely many of these rays by anti-rays in~$D'$ to double rays.
    Thus, $D$ contains infinitely many pairwise disjoint double rays.
    
    So, let us now assume that $\mu=\epsilon$.
    Then there exists some ray~$R_1$ that starts at~$D'$ and lies in~$\eta_1\in\cE_1$.
    Since $\eta_1<\mu$, there exists a finite vertex set~$S_1$ that separates $\eta_1$ from~$\mu$.
    If $R_1$ contains a vertex of~$S_1$, then let $P_1$ be the initial subdipath of~$R_1$ whose last vertex is the last common vertex of~$R_1$ and~$S_1$.
    If $R_1$ contains no vertex of~$S_1$, then let $P_1$ be the trivial dipath consisting of the first vertex of~$R_1$.
    Set $i_2:=|P_1|+|S_1|+1$.
    Then there exists a double ray $Q_2\in\cR_{i_2}$ that avoids $S_1$ and~$P_1$ and that ends in~$\eta_2\in\cE_{i_2}$.
    Some $D'$--$Q_2$ dipath avoids $S_1$ and~$P_1$, too.
    Furthermore, as $\eta_1<\epsilon$, we may assume that this $D'$--$Q_2$ dipath also avoids~$R_1$.
    So there exists a ray $R_2$ in~$\eta_2$ that has exactly its first vertex in~$D'$ and that avoids $S_1$ and~$P_1$ and is disjoint from~$R_1$.
    As in the previous case, we recursively construct infinitely many pairwise disjoint rays that start at distinct vertices of~$D'$ and end in distinct ends~$\eta_i$.
    By Observation~\ref{obs:raysInBiDirQGrid}, there are infinitely many pairwise disjoint anti-rays in~$D'$ whose end vertices are among the first vertices of the rays~$R_i$.
    Thus, we find infinitely many pairwise disjoint double rays by joining those anti-rays with a suitable infinite subset of $\{R_i\mid i\in\N\}$.
    This completes the subcase that \ref{itm:ConcentrationSomewhere:2} from Proposition~\ref{prop:ConcentrationSomewhere} holds.

    \medskip

    As the last subcase of Case 2, we now assume that \ref{itm:ConcentrationSomewhere:3} from Proposition~\ref{prop:ConcentrationSomewhere} holds, that is, we have $\eta<\mu$ for all $\mu\in\cE_i$ and all $\eta\in\cE_j$ if $i<j$.
    By passing to a valid sequence induced by $(\cR_n)_{n\in\N}$, we may assume that the ends in $\bigcup_{i\in\N}\cE_i$ either all lie above~$\epsilon$, or all lie below~$\epsilon$ or all are incomparable with~$\epsilon$.
    Let $\mu_1\in\cE_1$.
    If $\epsilon\not\leq\mu_1$, then there exists a finite vertex set $S$ such that every $\epsilon$--$\mu_1$ double ray meets~$S$.
    In $\mathcal R_s$ for $s:=|S|+1$ at least one double ray $R$ avoids~$S$.
    Let $\eta$ be the end that contains a forward tail of~$R$.
    Let $R'$ be a ray in~$\mu_1$ that avoids~$S$.
    Because of $\eta\leq\mu_1$, there are infinitely many pairwise disjoint $R$--$R'$ dipaths, one of which avoids~$S$.
    Thus, there exists an $\epsilon$--$\mu_1$ double ray avoiding~$S$, which is a contradiction that shows $\epsilon\leq\mu_1$.
    Analogously, we have $\epsilon\leq\eta$ for all $\eta\in\bigcup_{i\in\N}\cE_i$.
    For every $i\in\N$, let $\mu_i\in\cE_i$ and let $R^i$ be a ray in~$\mu_i$ such that no~$R^i$ contains vertices from~$D'$ and such that all these rays are pairwise disjoint.
    We recursively choose $D'$--$R^i$ dipaths $P_i$ and find rays $R_i$ in~$\mu_i$ that start at a vertex of~$D'$ and are otherwise disjoint from~$D'$ and such that all rays $R_i$ are pairwise disjoint.
    For this, we start with an arbitrary $D'$--$R^1$ dipath $P_1$ and choose a ray in $P_1\cup R^1$ that starts at~$D'$.
    For general $i$, there exists a $D'$--$R^i$ dipath $P_i$ such that the ray $R_i$ in $P_i\cup R^i$ that starts at the same vertex as~$P_i$ avoids all $R_j$ for $j<i$ because of $\epsilon<\mu_i<\mu_j$.
    These rays clearly satisfy the properties as claimed.
    By taking an infinite subset of them for which we find by Observation~\ref{obs:raysInBiDirQGrid} infinitely many pairwise disjoint anti-rays in~$D'$ that end at the starting vertices of those rays, we obtain infinitely many pairwise disjoint double rays in~$D$.
    This finishes the case that \ref{itm:ConcentrationSomewhere:3} from Proposition~\ref{prop:ConcentrationSomewhere} holds and it finishes Case~2.

    \medskip

    {\bf Case 3.}
    Let us now consider the remaining case:
    For every end $\epsilon$ and every valid sequence $(\mathcal Q_{n_i})_{i\in\N}$ induced by $(\cR_n)_{n\in\N}$, neither the rays nor the anti-rays concentrate in~$\epsilon$.
    Then we may assume that all double rays from distinct sets~$\mathcal R_i$ have their tails which are anti-rays in distinct ends and have their tails which are rays in distinct ends.
    So by Proposition~\ref{prop:ConcentrationSomewhere}, there is a valid sequence $(\cQ_n)_{n\in\N}$ induced by $(\cR_n)_{n\in\N}$ such that for the set $\cE_n$ of ends containing the forward tails of the double rays in~$\cQ_n$ one of \ref{itm:ConcentrationSomewhere:2}, \ref{itm:ConcentrationSomewhere:3}, or \ref{itm:ConcentrationSomewhere:4} from that proposition holds.
    We may assume that $\cQ_n=\cR_n$ for all $n\in\N$.
    We split the remaining proof of this case into which of those three cases from Proposition~\ref{prop:ConcentrationSomewhere} holds.
    Analogously, we may assume that for the backward tails of the double rays in each~$\cR_n$ and for the set $\cE'_n$ of ends containing these tails, one of \ref{itm:ConcentrationSomewhere:2}, \ref{itm:ConcentrationSomewhere:3}, or \ref{itm:ConcentrationSomewhere:4} from Proposition~\ref{prop:ConcentrationSomewhere} holds.

    \medskip
    
    We start with the case \ref{itm:ConcentrationSomewhere:2} from Proposition~\ref{prop:ConcentrationSomewhere}, that is, we have $\mu<\eta$ for all $\mu\in\cE_i$ and all $\eta\in\cE_j$ if $i<j$.
    For all $i\in\N$, let $\mu_i\in\cE_i$.
    So we have $\mu_1<\mu_2<\dots$.
    By Theorem~\ref{thm:Zuther:grids}, there is a supremum $\mu$ of the chain $(\mu_i)_{i\in\N}$ containing infinitely many pairwise disjoint rays.
    If, for each $\mu_i$, there are only finitely many ends $\eta$ that contain anti-rays and satisfy $\mu_i<\eta<\mu$, then we can assume, by passing to a valid sequence induced by $(\cR_i)_{i\in\N}$, that there are no such ends $\eta$ at all.
    Let $T_1,\ldots,T_i$ be $i$ pairwise disjoint rays in~$\mu$.
    Since $\epsilon<\mu_{i+1}\leq\mu$ for all $\epsilon\in\cE_i$, there exists a set $\cP$ consisting of $i$ dipaths from each element of~$\cR_i$ to each $T_j$ that are all pairwise disjoint.
    By Menger's theorem there exist a set $\cQ_i$ consisting of $i$ pairwise disjoint double rays in the union of the elements of~$\cR_i$, of the $T_j$ and of the elements of~$\cP$ such that their forward tails lie in~$\mu$ and the backward tails coincide with backward tails from elements of~$\cR_i$.
    Thus, there is a valid sequence induced by $(\cQ_i)_{i\in\N}$ that concentrates in~$\mu$.
    So, the assumption of Case 2 is satisfied and, hence, we find infinitely many pairwise disjoint double rays in~$D$.
    
    So, we may  assume that for every $\mu_i$ there exists an end $\eta_i$ with $\mu_i<\eta_i<\mu$ that contains anti-rays.
    In particular, there are infinitely many pairwise disjoint anti-rays $R_1,R_2,\ldots$ that lie in distinct ends $\epsilon<\mu$.
    By Theorem~\ref{thm:grids}, there exists a subdigraph $D'$ of~$D$ such that $D'$ is a subdivision of the bidirected quarter-grid with all rays in~$\mu$.
    Since each $R_i$ lies in an end $\epsilon<\mu$, we recursively find dipaths $P_i$ from $R_i$ to~$D'$ and tails $R_i'$ of~$R_i$ such that $P_i \cup R'_i$ and $P_j \cup R'_j$ are disjoint for all $i \neq j$.
    By Observation~\ref{obs:raysInBiDirQGrid}, there is an infinite index set $(j_i)_{i\in\N}$ such that we can extend an anti-ray in $R_{j_i}\cup P_{j_i}$ by a suitable ray in~$D'$ to obtain an infinite set of pairwise disjoint double rays.

    The case that \ref{itm:ConcentrationSomewhere:3} from Proposition~\ref{prop:ConcentrationSomewhere} holds for the backward tails of the~$\cR_i$ is analogous, that is, if $\mu>\eta$ for all $\mu\in\cE'_i$ and all $\eta\in\cE'_j$ with $i<j$, then we find infinitely many pairwise disjoint double rays in~$D$.
    
    \medskip

    Let us now assume that \ref{itm:ConcentrationSomewhere:3} from Proposition~\ref{prop:ConcentrationSomewhere} holds, that is, we have $\mu>\eta$ for all $\mu\in\cE_i$ and all $\eta\in\cE_j$ if $i<j$.
    By the previous situation, we may assume that \ref{itm:ConcentrationSomewhere:3} from Proposition~\ref{prop:ConcentrationSomewhere} does not hold for the backward tails of the~$\cR_i$.
    Thus, we have $\eta_j\not\leq \eta_i$ for all $\eta_j\in\cE'_j$, all $\eta_i\in\cE'_i$ and $i<j$.
    If there are, for every $j\in\N$, infinitely many $i\in\N$ such that $\cE'_i$ contains an element $\eta<\mu_j\in\cE_j$, then we pick an arbitrary $\eta_1$--$\mu_1$ double ray~$R_1$, where $\eta_1$ lies in~$\cE'_{n_1}$ and $\mu_1\in\cE_1$.
    Now we pick some $\mu_2\in\cE_2$ and some $\eta_2\in\cE'_{n_2}$ with $n_2>n_1$ such that $\eta_2< \mu_2<\mu_1$ and $\eta_2\not\leq\eta_1$ hold.
    We want to construct an $\eta_2$--$\mu_2$ double ray $R_2$ disjoint from~$R_1$.
    If for some anti-ray in~$\eta_2$ and ray in~$\mu_2$ there are infinitely many pairwise disjoint dipaths from the anti-ray to the ray all of which meet~$R_1$, then they must meet some forward tail of~$R_1$ since $\eta_2\not\leq\eta_1$.
    So $\eta_2\leq\mu_1\leq\mu_2$ must hold, which contradicts our assumptions.
    Thus, all but finitely many of the dipaths are disjoint from~$R_1$.
    So we can find an $\eta_2$--$\mu_2$ double ray $R_2$ disjoint from~$R_1$.
    Similarly, we find an $\eta_j$--$\mu_j$ double ray~$R_j$ that is disjoint from all $R_k$ with $k<j$.
    Thus, we find infinitely many pairwise disjoint double rays in~$D$.

    So, we may assume that there are, for each $j\in\N$, only finitely many $i\in\N$ such that $\eta<\mu$ for some $\eta\in\cE'_j$ and $\mu\in\cE_i$.
    As \ref{itm:ConcentrationSomewhere:3} from Proposition~\ref{prop:ConcentrationSomewhere} holds for the forward tails in the~$\cR_i$, we may assume by switching to a suitable valid sequence induced by $(\cR_n)_{n\in\N}$ that there are no $i,j\in\N$ such that $\eta<\mu$ for any $\eta\in\cE'_j$ and $\mu\in\cE_i$.
    Let $R_1\in\cR_1$ and let $S_1$ be a finite vertex set that separates $\cE_2$ from~$\cE'_1$ and~$\cE_1$, which exists by the choice of our valid sequence.
    Note that then $S_1$ also separates every $\cE_i$ for $i\geq 2$ from~$\cE'_1$ and~$\cE_1$.    
    Let us suppose that there exists no vertex $v$ on~$R_1$ such that $R_1v$ avoids all double rays in $\bigcup_{i>1}\cR_i$ that lie in $D-S_1$.
    Since the end that contains the forward tail of every $R'\in \bigcup_{i>1}\cR_i$ that meets $R_1$ but avoids~$S_1$ lies below the end in~$\cE_1$, we find a ray $R''$ equivalent to the forward tail of~$R_1$ and an $R'$--$R''$ dipath $P_1$ whose first vertex lies after some vertex in $R_1\cap R'$ such that $R'$ and $P_1$ avoid~$S_1$.
    Then the union of $R_1$, $R'$, $R''$ and $P_1$ contains a double ray from an end in~$\cE'_1$ to an end in~$\cE_2$ that avoids~$S_1$, a contradiction.
    Thus, there exists $v$ on~$R_1$ such that $R_1v$ avoids all double rays in $\bigcup_{i>1}\cR_i$ that lie in $D-S_1$ and, if $R_1$ contains vertices from~$S_1$, then we may assume that $v$ lies before the first vertex of~$S_1$ on~$R_1$.
    If there exists no vertex $w$ on~$R_1$ and no finite vertex set $S'_1$ such that $wR_1$ avoids all double rays in $\bigcup_{i>1}\cR_i$ that lie in $D-(S_1\cup S'_1)$, then we can use these double rays and the fact that their forward tails lie in ends in $\bigcup_{i > 1} \cE_i$ to find infinitely many disjoint dipaths from a forward tail of~$R_1$ to some ray in $\epsilon\in\cE_2$.
    This implies that $\epsilon$ is larger than the end that contains the forward tail of~$R_1$ and, hence, these two ends must be the same, which contradicts that \ref{itm:ConcentrationSomewhere:3} from Proposition~\ref{prop:ConcentrationSomewhere} holds.
    Thus, there exists $w$ on~$vR_1$ and a finite vertex set $S'_1$ such that $wR_1$ avoids all double rays in $\bigcup_{i>1}\cR_i$ that lie in $D-(S_1\cup S'_1)$.
    Let $i_2>|V(vR_1w)|+|S_1|+|S'_1|$.
    One of the double rays $R_2\in\cR_{i_2}$ must be disjoint from~$vR_1w$ and from $S_1\cup S'_1$.
    Note that $R_2$ is also disjoint from $R_1v$ and $wR_1$ by the choices of~$v$ and~$w$, respectively, so it is disjoint from~$R_1$.
    We continue by taking a finite vertex set $S_2$ that separates $\cR_{i_2+1}$ from the four ends that contain the tails of~$R_1$ and~$R_2$, finding vertices on~$R_1$ and~$R_2$ similar to~$v$ and~$w$ and a vertex set $S'_2$ similar to~$S'_1$ and then finding a suitable double ray~$R_3$.
    Recursively, we find infinitely many pairwise disjoint double rays in~$D$.

    The case that \ref{itm:ConcentrationSomewhere:2} from Proposition~\ref{prop:ConcentrationSomewhere} holds for the backward tails of the~$\cR_i$ is analogous, that is, if $\mu<\eta$ for all $\mu\in\cE'_i$ and all $\eta\in\cE'_j$ if $i<j$, then we find infinitely many pairwise disjoint double rays in~$D$.

    \medskip

    It remains to consider the situation \ref{itm:ConcentrationSomewhere:4} from Proposition~\ref{prop:ConcentrationSomewhere} holds for the $\cE_i$'s as well as the $\cE'_i$'s, that is, $\eta$ and~$\mu$ are incomparable for all $\eta\in\cE_i$ and $\mu\in\cE_j$ for $i\neq j$ and for all $\eta\in\cE'_i$ and $\mu\in\cE'_j$ for $i\neq j$.
    Let $R_1\in\cR_1=:\cP_1$.
    Then there is a valid sequence $(\cR^1_n)_{n\in\N}$ induced by $(\cR_n)_{n\in\N}$ such that either all $R\in\bigcup_{n>1}\cR_n^1$ intersect $R_1$ or all of them are disjoint from~$R_1$.
    We consider $i_2=3$ and let $\{R_1^2,R_2^2,R_3^2\}=\cR^1_{i_2}$.
    There is again a valid sequence $(\cR^{2,1}_n)_{n\in\N}$ induced by $(\cR^1_n)_{n\in\N}$ all of whose double rays behave the same to~$R^2_1$ with respect to intersection, the same for a valid sequence $(\cR^{2,2}_n)_{n\in\N}$ induced by $(\cR^{2,1}_n)_{n\in\N}$ for $R^2_2$ and the same for a valid sequence $(\cR^{2,3}_n)_{n\in\N}$ induced by $(\cR^{2,2}_n)_{n\in\N}$ for~$R^3_2$.
    For $i\neq j$ two of these sequences show the same behaviour with respect to~$R^2_i$ and~$R^2_j$.
    We set $\cP_2:=\{R^2_i,R^2_j\}$ and continue the construction for $i_3=5$ and so on.
    In the end, one type of behaviour (either trivial or non-trivial intersection) was used infinitely often to build the induced sequences and thus, there is a valid sequence $(\cQ_n)_{n\in\N}$ induced by $(\cP_n)_{n\in\N}$ where either all elements of $\bigcup_{n\in\N}\cQ_n$ pairwise intersect or they are pairwise disjoint.
    In the second case, we directly have infinitely many pairwise disjoint double rays.
    So let us assume that all elements of $\bigcup_{n\in\N} \cQ_n$ pairwise intersect.
    In this case, we fix double rays $Q_i\in\cQ_i$ for $i\in\{1,2,3\}$ with $Q_i=A_i\cup B_i$ for an anti-ray $A_i$ and a ray~$B_i$.
    Since the $4j$ double rays in~$\cQ_{4j}$ are pairwise disjoint, there exists, for each $i\in\{1,2,3\}$, $2j$ double rays $Q^k_i\in\cQ_{4j}$ such that $Q^k_i$ contains no vertices among the last $j$ or first $j$ vertices of~$A_i$ or~$B_i$, respectively, but contains a vertex either on~$A_i$ of distance more than $j$ to the last vertex of~$A_i$ or on~$B_i$ of distance more than $j$ from the first vertex of~$B_i$.
    In particular, either $j$ such double rays contain vertices on~$A_i$ or $j$ such double rays contain vertices on~$B_i$.
    Thus, there exist distinct $k,\ell\in\{1,2,3\}$ such that for every $j\in\N$ there are either $j$ pairwise disjoint $A_k$--$A_\ell$ or $B_k$--$B_\ell$ dipaths that originate from double rays of~$\cQ_{4j}$.
    This implies either $A_k\leq A_\ell$ or $B_k\leq B_\ell$, which is impossible, as the ends that contain these anti-rays or rays are incomparable since \ref{itm:ConcentrationSomewhere:4} from Proposition~\ref{prop:ConcentrationSomewhere} holds for the $\cE_i$'s as well as the $\cE'_i$'s.
    This contradiction finishes the third case and thus the proof of the theorem.
\end{proof}

Let us now prove a lemma similar to Lemma~\ref{lem:reduction:oneEnd} but for double rays instead of rays and anti-rays.

\begin{lemma}\label{lem:reduction:doubleRays:oneEnd}
    Let $D$ be a digraph and let $(\cR_n)_{n\in\N}$ be a valid sequence.
    If there is an end~$\epsilon$ such that all tails of all $R\in\bigcup_{n\in\N}\cR_n$ lie in~$\epsilon$, then there is a one-ended subdigraph $D'$ of~$D$ such that there is a valid sequence $(\cQ_n)_{n\in\N}$ induced by $(\cR_n)_{n\in\N}$ in~$D'$.
\end{lemma}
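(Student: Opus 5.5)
The plan is to mimic the proof of Lemma~\ref{lem:reduction:oneEnd}, which suggests two routes. The first is to split every double ray into a forward tail and a backward tail and apply Lemma~\ref{lem:reduction:oneEnd} directly. The second, which I will follow, is to build~$D'$ as the union of all the double rays together with connecting dipaths arranged like the vertical edges of (reversed) bidirected quarter-grids. The second route is more convenient because we must keep entire double rays, not only tails, inside~$D'$.

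\emph{Step 1: normalise the double rays.} I first pass to a valid sequence induced by $(\cR_n)_{n\in\N}$ such that no vertex of~$D$ lies on infinitely many of the double rays in $\bigcup_n\cR_n$. This is the main obstacle. In Lemma~\ref{lem:reduction:oneEnd} we could simply shorten rays, but a double ray has no tail that is again a double ray. I would fix an enumeration $v_1,v_2,\ldots$ of the (countably many relevant) vertices lying on the double rays. Then I choose $\cQ_i\subseteq\cR_{n_i}$ recursively. Since $\cR_{n_i}$ consists of pairwise disjoint double rays, at most $i$ of them meet $\{v_1,\ldots,v_i\}$. So, choosing $n_i\ge 2i$, I can pick $i$ double rays avoiding $\{v_1,\ldots,v_i\}$. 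Then each $v_k$ lies on at most one double ray of each $\cQ_i$, and only on double rays of $\cQ_i$ with $i<k$. Hence each vertex lies on only finitely many of the chosen double rays. (If $D$ is uncountable, I restrict attention to the countable union of the double rays, which suffices.)

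\emph{Step 2: build~$D'$.} Enumerate the chosen double rays as $R_1,R_2,\ldots$, and write $A_i$ for the backward tail and $B_i$ for the forward tail of~$R_i$ at some fixed vertex. All $A_i$ and $B_i$ lie in~$\epsilon$, so consecutive ones are equivalent. Along a single enumeration of all required connections, I choose infinitely many pairwise disjoint dipaths in both directions between $B_i$ and $B_{i+1}$, between $A_i$ and $A_{i+1}$, and between $A_1$ and~$B_1$. Each new dipath is chosen to avoid the finitely many previously chosen dipaths. This is possible because equivalence gives infinitely many disjoint dipaths, and these dipaths can be taken to start and end far out on the tails. Let $D'$ be the union of all $R_i$ and all chosen dipaths. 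The dipaths may intersect the double rays, but that is harmless because the $R_i$ themselves lie in~$D'$. So $(\cQ_i)_{i\in\N}$ is a valid sequence induced by $(\cR_n)_{n\in\N}$ in~$D'$.

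\emph{Step 3: $D'$ is one-ended.} By construction, all $A_i$ and $B_i$ are equivalent in~$D'$. Let $R$ be a ray or anti-ray in~$D'$. Because each vertex lies on only finitely many $R_i$ and the connecting dipaths are finite and locally finitely many, $R$ either meets some $A_i$ or $B_i$ infinitely often, or meets infinitely many distinct $R_i$. In the first case, $R$ is equivalent to that tail. In the second case, the quarter-grid-like connecting dipaths between consecutive tails let us build infinitely many disjoint $R$--$B_1$ and $B_1$--$R$ dipaths, exactly as in Lemma~\ref{lem:reduction:oneEnd}. Hence every ray and anti-ray of~$D'$ lies in one end.
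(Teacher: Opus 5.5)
Your proof is correct and follows the same overall strategy as the paper's. You thin out the sequence so that every vertex lies on only finitely many chosen double rays, add pairwise disjoint connecting dipaths, and check one-endedness via the dichotomy ``meets one tail infinitely often / meets infinitely many double rays''. Two details differ.

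For the thinning, the paper forbids growing balls around the centre vertices $v_i^n$ of the double rays already chosen. You instead forbid an initial segment of an enumeration of all relevant vertices and take $n_i\ge 2i$. Both work, and yours is slightly cleaner.

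For the connections, the paper does not build a ladder. It enumerates all ordered pairs of chosen double rays, each pair infinitely often, and adds dipaths from the forward tail of one to the backward tail of the other and back. The benefit comes in the one-endedness step. A single connecting dipath leads from the forward tail of any double ray met by $R$ to $R_1^1$. Another single one leads from the forward tail of $R_1^1$ to the backward tail of that double ray, from where one walks along it to $R$.

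With your ladder the routes need many hops, and the direction of travel matters in a way that ``exactly as in Lemma~\ref{lem:reduction:oneEnd}'' hides. Fix $x\in R\cap R_i$. For $B_1\le R$ you cannot simply climb the $B$-ladder: only finitely many connecting dipaths land on $B_i$ before $x$, and these may all be blocked. Instead, proceed as follows.
\begin{enumerate}[label=(\alph*)]
\item Take $R_i$ disjoint from the finite set $F$ of vertices used so far. This is possible because each vertex lies on only finitely many $R_i$, and this is where that property is really needed.
\item Route $B_1\to A_1\to A_2\to\dots\to A_i$, land on $A_i$ farther out than $x$, and walk inward along $R_i$ to~$x$.
\item Choose the hops from the top down, each avoiding $F$. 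On every $A_j$, the departure point must lie beyond $F\cap V(A_j)$, and the landing point must lie farther out than the departure point.
\end{enumerate}
For $R\le B_1$, walk forward along $R_i$ from~$x$ and descend the $B$-ladder, choosing the hops in the order of travel so that each lands beyond $F$. With this spelled out the argument is complete. No quarter-grid arrangement of the connecting dipaths is needed, only infinitely many pairwise disjoint ones for each connection.
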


\begin{proof}
    For all $n\in\N$ and all $1\leq i\leq n$, let $R_i^n$ be the elements of $\cR_n$ and let $R_i^n=A_i^n\cup B_i^n$, where $A_i^n$ is an anti-ray, $B_i^n$ is a rays and they share a unique vertex~$v_i^n$.
    Let $\cQ_1=\cR_1$ and set $S_1:=\{v_1^1\}$.
    Then there are two elements in~$\cR_3$ that are disjoint from $S_1$ and we let $\cQ_2$ be the set of these two double rays.
    We let $S_2$ be the set consisting of $v_1^1$ together with its in- and out-neighbour in~$R_1^1$ and of $v_i^3$ and $v_j^3$.
    In $\cR_{|S_2|+3}$, we find three double rays that avoid~$S_2$.
    Let $\cQ_3$ be the set of those double rays.
    We construct $S_3$ analogously such that we enlarge the in- and out-balls in~$R_i^n$ around the vertices $v_i^n$ by~$1$ in each step.
    Thereby, we construct a valid sequence $(\cQ_n)_{n\in\N}$ induced by $(\cR_n)_{n\in\N}$ such that every vertex lies in only finitely many double rays from $\bigcup_{n\in\N}\cQ_n$.
    Now we enumerate all ordered pairs of elements from $\bigcup_{n\in\N}\cQ_n$ such that every pair occurs infinitely often.
    We then recursively add dipaths from the rays $B_i^n$ to the anti-rays $A_j^m$ and from the anti-rays $A_j^m$ to the rays $B_i^n$ from each pair such that all these dipaths are pairwise disjoint.
    This is possible as all rays and anti-rays involved are equivalent in~$D$.
    We denote the resulting subdigraph of~$D$ by~$D'$.
    By construction, it only remains to show that $D'$ is one-ended.
    This, however, follows from the choice of the dipaths: as every pair of double rays occurred infinitely often in the enumeration, all $A_i^n$ and $B_j^m$ are equivalent in~$D'$.
    Furthermore, as the additional dipaths are all pairwise disjoint, every ray or anti-ray $R$ in~$D'$ must either meet some $A_i^n$ or some $B_i^m$ infinitely often or must meet infinitely many $A_i^n$ or~$B_j^m$.
    While in the first case $R$ is equivalent to~$A_i^n$ or~$B_j^m$ directly, we can use the infinitely many double rays that $R$ meets to find infinitely many pairwise disjoint dipaths to and from~$A_1^1$ in the other case.
    This is possible since each vertex lies in only finitely many double rays from $\bigcup_{n\in\N}\cQ_n$.
    Thus, $D'$ is one-ended.
\end{proof}

Now we can prove Theorem~\ref{thm:main:intro:ReductionToOneEnd:doubleRays}.

\begin{proof}[Proof of Theorem~\ref{thm:main:intro:ReductionToOneEnd:doubleRays}.]
    Let us assume that Problem~\ref{prob:ubiquity:doubleRays} holds for the class of one-ended digraphs and let $D$ be a digraphs and $(\cR_n)_{n\in\N}$ be a valid sequence in~~$D$.
    By Theorem~\ref{thm:ubiquity:doubleRays}, we may assume that $(\cR_n)_{n\in\N}$ concentrates in finitely many ends.
    Then there is a valid sequence $(\cQ_n)_{n\in\N}$ induced by $(\cR_n)_{n\in\N}$ such that all tails from all elements of $\bigcup_{n\in\N}\cQ_n$ are equivalent.
    By Lemma~\ref{lem:reduction:doubleRays:oneEnd}, there exists a one-ended subdigraph $D'$ of~$D$ and a valid sequence $(\cP_n)_{n\in\N}$ in~$D'$ induced by $(\cQ_n)_{n\in\N}$.
    Thus, there are infinitely many pairwise disjoint double rays in~$D'$ and hence in~$D$.
\end{proof}

\section{On the equivalence of ubiquity problems}\label{sec:equivalence}

In this section, we discuss connections between Problems~\ref{prob:ubiquity:doubleRays} and~\ref{prob:ubiquity:antiraysRays}.
We also introduce two more problems related to them.

\begin{thm}\label{thm:equivalence:probsFromIntro}
    Every counterexample to Problem~\ref{prob:ubiquity:antiraysRays} is also a counterexample to Problem~\ref{prob:ubiquity:doubleRays}.

    Furthermore, if every one-ended digraph that contains infinitely many rays and infinitely many anti-rays that are all pairwise disjoint also contains infinitely many pairwise disjoint double rays, then there exists a counterexample to Problem~\ref{prob:ubiquity:antiraysRays} if and only if there exists one to Problem~\ref{prob:ubiquity:doubleRays}.
\end{thm}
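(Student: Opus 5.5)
The plan is to prove the two assertions separately. Both rest on two facts. First, a double ray splits at any vertex into a disjoint ray and anti-ray. Second, rays and anti-rays lying in a common end can be recombined into double rays via Lemma~\ref{prop:rays+anti=double}.

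\emph{First assertion.} Let $D$ be a counterexample to Problem~\ref{prob:ubiquity:antiraysRays}. Then for every $n\in\N$ there is a set $\cX_n$ of $n$ rays and $n$ anti-rays of~$D$, all pairwise disjoint. However, $D$ contains no infinite family of rays and anti-rays (infinitely many of each) that are pairwise disjoint.

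\begin{itemize}
\item[(a)] I first show that $D$ has no infinite set of pairwise disjoint double rays. Suppose $R_1,R_2,\ldots$ were such double rays. Split each $R_i$ at some vertex~$v_i$ into its backward tail $R_iv_i$ and the forward tail of $R_i$ starting at the out-neighbour of~$v_i$. This yields infinitely many rays and infinitely many anti-rays, all pairwise disjoint, which is a contradiction.
\item[(b)] Next I produce $n$ pairwise disjoint double rays for every~$n$. Since $D$ is a counterexample, Theorem~\ref{thm:ubiquity:antiraysRays} implies that $(\cX_n)_{n\in\N}$ concentrates in some end~$\epsilon$. So there is an $N$ such that each $\cX_n$ has at most $N$ rays and at most $N$ anti-rays outside~$\epsilon$. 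Hence $\cX_{n+N}$ contains at least $n$ rays and at least $n$ anti-rays in~$\epsilon$, all pairwise disjoint.
\item[(c)] Lemma~\ref{prop:rays+anti=double} applied to these gives $n$ pairwise disjoint double rays in~$D$.
\end{itemize}

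Together, (a)--(c) show that $D$ is a counterexample to Problem~\ref{prob:ubiquity:doubleRays}. The only point needing care is the index shift by $N$ in~(b).

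\emph{Second assertion.} One direction is the first assertion. For the converse, assume the stated hypothesis on one-ended digraphs and let $D$ be a counterexample to Problem~\ref{prob:ubiquity:doubleRays}. I invoke Theorem~\ref{thm:main:intro:ReductionToOneEnd:doubleRays}: since the consistently oriented double ray is not ubiquitous, it is not ubiquitous in the class of one-ended digraphs either. So there is a one-ended counterexample~$D'$. (Alternatively one can rebuild $D'$ directly from $D$ via Theorem~\ref{thm:ubiquity:doubleRays} and Lemma~\ref{lem:reduction:doubleRays:oneEnd}, as in the proof of that theorem.) I then check that $D'$ is a counterexample to Problem~\ref{prob:ubiquity:antiraysRays}:

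\begin{itemize}
\item[(d)] For every $n$, splitting the $n$ disjoint double rays of $D'$ as in~(a) gives $n$ rays and $n$ anti-rays, all pairwise disjoint.
\item[(e)] Suppose $D'$ contained infinitely many rays and infinitely many anti-rays, all pairwise disjoint. Since $D'$ is one-ended, the hypothesis would give infinitely many pairwise disjoint double rays in~$D'$. This contradicts the choice of~$D'$.
\end{itemize}

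I expect no serious obstacle: the substantive work is already contained in Theorems~\ref{thm:ubiquity:antiraysRays} and~\ref{thm:main:intro:ReductionToOneEnd:doubleRays} and Lemma~\ref{prop:rays+anti=double}. The step requiring the most attention is~(b). There one must make sure that the concentration given by Theorem~\ref{thm:ubiquity:antiraysRays} yields, for each $n$, $n$ rays and $n$ anti-rays that are \emph{all in the same end}~$\epsilon$, since this is exactly the hypothesis Lemma~\ref{prop:rays+anti=double} needs.
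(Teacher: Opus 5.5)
Your proposal is correct. For the first assertion you follow the paper's argument. The paper phrases it contrapositively, starting from a $D$ that is not a counterexample to Problem~\ref{prob:ubiquity:doubleRays}, but it uses the same ingredients: Theorem~\ref{thm:ubiquity:antiraysRays}, Lemma~\ref{prop:rays+anti=double}, and splitting double rays. Your explicit index shift by $N$ simply makes precise the paper's ``we may assume all elements lie in a common end.''

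For the second assertion you take a genuinely different route. You use Theorem~\ref{thm:main:intro:ReductionToOneEnd:doubleRays} as a black box to get a one-ended counterexample $D'$ to Problem~\ref{prob:ubiquity:doubleRays}. You then check directly that $D'$ is a counterexample to Problem~\ref{prob:ubiquity:antiraysRays}: splitting double rays gives $n$ rays and $n$ anti-rays, and the hypothesis applied to $D'$ rules out infinitely many of each. The paper instead argues by contradiction from an arbitrary counterexample $D$ to Problem~\ref{prob:ubiquity:doubleRays}, in these steps:
\begin{itemize}
\item[(1)] it uses Theorem~\ref{thm:ubiquity:doubleRays} to concentrate a valid sequence in a single end;
\item[(2)] it extracts $n$ rays and $n$ anti-rays in that end;
\item[(3)] it builds a one-ended subdigraph with the ray/anti-ray reduction, Lemma~\ref{lem:reduction:oneEnd}, rather than Lemma~\ref{lem:reduction:doubleRays:oneEnd};
\item[(4)] it applies to that subdigraph the assumption that Problem~\ref{prob:ubiquity:antiraysRays} has no counterexample.
\end{itemize}

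Your version is shorter and more modular. It also yields a slightly sharper statement: under the hypothesis, \emph{every} one-ended counterexample to Problem~\ref{prob:ubiquity:doubleRays} is itself a counterexample to Problem~\ref{prob:ubiquity:antiraysRays}. The paper's version does not rely on the main theorem or on Lemma~\ref{lem:reduction:doubleRays:oneEnd}. Once concentration is obtained, it works entirely with rays and anti-rays, and it locates the resulting counterexample explicitly as a one-ended subdigraph of the given $D$. (Your parenthetical alternative via Theorem~\ref{thm:ubiquity:doubleRays} and Lemma~\ref{lem:reduction:doubleRays:oneEnd} would also give a subdigraph of $D$.)
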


\begin{proof}
    Let $D$ be a digraph.
    Let us first assume that $D$ is not a counterexample to Problem~\ref{prob:ubiquity:doubleRays}.
    For all $n\in\N$, let $\cX_n$ be a set consisting of~$n$ rays and $n$ anti-rays in~$D$ all of which are pairwise disjoint.
    By Theorem~\ref{thm:ubiquity:antiraysRays}, we may assume that all elements from all sets~$\cX_n$ lie in a common end~$\epsilon$.
    By Lemma~\ref{prop:rays+anti=double}, there exists for every $n\in\N$ a set $\cY_n$ of $n$ disjoint double rays all of whose tails lie in~$\epsilon$.
    Since $D$ is not a counterexample to Problem~\ref{prob:ubiquity:doubleRays}, there exist infinitely many pairwise disjoint double rays in~$D$.
    By taking two disjoint tails from each double ray, we obtain a set consisting of infinitely many rays and infinitely many anti-rays such that all its elements are pairwise disjoint, which shows that $D$ is not a counterexample to Problem~\ref{prob:ubiquity:antiraysRays}.

    By the previous argument, it suffices for the additional statement to show the backward implication.
    Let us assume that Problem~\ref{prob:ubiquity:doubleRays} has a counterexample $D$, but Problem~\ref{prob:ubiquity:antiraysRays} has none.
    In particular, $D$ is no counterexample to Problem~\ref{prob:ubiquity:antiraysRays}.
    Let $(\cR_n)_{n\in\N}$ be a valid sequence in~$D$.
    By Theorem~\ref{thm:ubiquity:doubleRays}, we may assume that $(\cR_n)_{n\in\N}$ concentrates in finitely many ends.
    Thus, there exists a valid sequence $(\cQ_n)_{n\in\N}$ such that all tails of elements from the sets~$\cQ_n$ lie in a common end~$\epsilon$.
    In particular, we find sets $\cX_n$, for all $n\in\N$, consisting of $n$ rays and $n$ anti-rays all of which are pairwise disjoint and all of which lie in~$\epsilon$.
    By Lemma~\ref{lem:reduction:oneEnd}, there exists a one-ended subdigraph $D'$ of~$D$ and, for every $n\in\N$, a set $\cY_n$ consisting of $n$ rays and $n$ anti-rays in~$D'$ that are all pairwise disjoint.
    By assumption, $D'$ is no counterexample to Problem~\ref{prob:ubiquity:antiraysRays}.
    Hence, $D'$ contains a set of infinitely many rays and infinitely many anti-rays that are all pairwise disjoint.
    Then the assumption implies the existence of infinitely many pairwise disjoint double rays in~$D'$ and hence in~$D$, which shows that $D$ is not a counterexample to Problem~\ref{prob:ubiquity:doubleRays} which contradicts the choice of~$D$.
\end{proof}

Let us now state two more problems.

\begin{problem}\label{prob:ubiquity:1/2:antiraysRays}
    Let $m\in\N$.
    If a digraph contains for all $n\in\N$ a set of $m$ rays and $n$ anti-rays that are pairwise disjoint, does it contain a set of $m$ rays and infinitely many anti-rays that are pairwise disjoint?
\end{problem}

\begin{problem}\label{prob:ubiquity:2/2:antiraysRays}
    If a digraph contains for all $n\in\N$ a set of $n$ rays and infinitely many anti-rays that are pairwise disjoint, does it contain a set of infinitely many rays and infinitely many anti-rays that are pairwise disjoint?
\end{problem}

These two problems are strongly related to Problem~\ref{prob:ubiquity:antiraysRays} as we shall discuss now.

\begin{thm}
    Let $D$ be a digraph that contains for each $n\in\N$ a set consisting of $n$ rays and $n$ anti-rays in~$D$ that are all pairwise disjoint.
    Then $D$ is a counterexample to Problem~\ref{prob:ubiquity:antiraysRays} if and only if it is a counterexample either to Problem~\ref{prob:ubiquity:1/2:antiraysRays} for some $m\in\N$ or to Problem~\ref{prob:ubiquity:2/2:antiraysRays}.
\end{thm}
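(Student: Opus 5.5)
The plan is to prove both directions directly, reading "$D$ is a counterexample to Problem~\ref{prob:ubiquity:antiraysRays}" as: $D$ satisfies the hypothesis, which is given, but contains no set of infinitely many rays and infinitely many anti-rays that are pairwise disjoint. Problem~\ref{prob:ubiquity:2/2:antiraysRays} has the same desired conclusion as Problem~\ref{prob:ubiquity:antiraysRays}. Hence the whole statement reduces to a case analysis on whether $D$ contains, for some $n$, $n$ rays together with infinitely many anti-rays, all pairwise disjoint.

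\emph{Backward direction.} Suppose first that $D$ is a counterexample to Problem~\ref{prob:ubiquity:2/2:antiraysRays}. Then in particular $D$ lacks the conclusion, that is, it has no infinite family of rays and infinite family of anti-rays that are pairwise disjoint. Since $D$ satisfies the hypothesis of Problem~\ref{prob:ubiquity:antiraysRays} by assumption, it is a counterexample to it.

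Suppose instead that $D$ is a counterexample to Problem~\ref{prob:ubiquity:1/2:antiraysRays} for some $m$. Then $D$ contains no set of $m$ rays and infinitely many anti-rays that are pairwise disjoint. If $D$ contained infinitely many rays and infinitely many anti-rays, all pairwise disjoint, we could discard all but $m$ of the rays and obtain exactly such a set, a contradiction. So $D$ is again a counterexample to Problem~\ref{prob:ubiquity:antiraysRays}.

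\emph{Forward direction.} Let $D$ be a counterexample to Problem~\ref{prob:ubiquity:antiraysRays}, and distinguish two cases.

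First, suppose that for every $m\in\N$ the digraph $D$ contains $m$ rays and infinitely many anti-rays that are pairwise disjoint. Then $D$ satisfies the hypothesis of Problem~\ref{prob:ubiquity:2/2:antiraysRays} but not its conclusion, since that conclusion coincides with the conclusion of Problem~\ref{prob:ubiquity:antiraysRays}. So $D$ is a counterexample to Problem~\ref{prob:ubiquity:2/2:antiraysRays}.

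Otherwise, there is some $m$ for which no set of $m$ rays and infinitely many anti-rays is pairwise disjoint. We claim that $D$ is then a counterexample to Problem~\ref{prob:ubiquity:1/2:antiraysRays} for this $m$. It suffices to check its hypothesis: for every $n$, $D$ must contain $m$ rays and $n$ anti-rays that are pairwise disjoint. To get these, take the given set of $\max(m,n)$ rays and $\max(m,n)$ anti-rays that are all pairwise disjoint, and discard surplus elements down to $m$ rays and $n$ anti-rays.

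The only point needing care is the interpretation of ``counterexample'' as ``satisfies the hypothesis and fails the conclusion'', together with the observation that discarding elements preserves pairwise disjointness. Both are routine, so I expect no real obstacle and do not need the earlier heavy results such as Theorem~\ref{thm:ubiquity:antiraysRays}.
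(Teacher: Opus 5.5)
Your proposal is correct and is the paper's argument. The paper proves both directions by contraposition, but it uses exactly your observations: discarding elements gives the hypothesis of Problem~\ref{prob:ubiquity:1/2:antiraysRays} for every $m$; its conclusion for all $m$ is the hypothesis of Problem~\ref{prob:ubiquity:2/2:antiraysRays}; and the conclusion of Problem~\ref{prob:ubiquity:antiraysRays} coincides with that of Problem~\ref{prob:ubiquity:2/2:antiraysRays} and implies that of Problem~\ref{prob:ubiquity:1/2:antiraysRays}. Your case split is simply the paper's contrapositive written out directly.
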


\begin{proof}
    If $D$ is a not counterexample to Problem~\ref{prob:ubiquity:1/2:antiraysRays}, for any $m\in\N$, and not to Problem~\ref{prob:ubiquity:2/2:antiraysRays}, then the existence, for each $n\in\N$, of a set of $n$ rays and $n$ anti-rays that are all pairwise disjoint implies that the assumptions of Problem~\ref{prob:ubiquity:2/2:antiraysRays} are true and hence there is a set with infinitely many rays and infinitely many anti-rays that are all pairwise disjoint.
    Thus, $D$ is not a counterexample to Problem~\ref{prob:ubiquity:antiraysRays}.

    Lastly, if $D$ is not a counterexample to Problem~\ref{prob:ubiquity:antiraysRays}, then this directly implies that it is for no $m\in\N$ a counterexample to Problem~\ref{prob:ubiquity:1/2:antiraysRays}.
    So, the assumption and conclusion of Problem~\ref{prob:ubiquity:2/2:antiraysRays} and thereby $D$ is not a counterexample to that problem, either.
\end{proof}

\section*{Acknowledgement}

We thank Nathan Bowler for shortening an argument at the beginning of the proof of Theorem 4.2.

\bibliographystyle{amsplain}
\bibliography{Ubiquity}

\end{document}